\documentclass[11pt,twoside]{amsart}

\usepackage[english]{babel}
\usepackage[T1]{fontenc}
\usepackage{lmodern}
\usepackage{amsmath,amsthm,amssymb,amsfonts}
\usepackage{hyperref}
\usepackage[capitalise]{cleveref}
\usepackage[left=3cm,right=3cm,top=3cm,bottom=3cm,includeheadfoot]{geometry}
\usepackage{bbm}
\usepackage{xcolor}
\usepackage{enumerate,enumitem}
\usepackage{fancyhdr}
\usepackage{tikz-cd}
\usepackage[abbrev,nobysame]{amsrefs}
\usepackage{graphicx}
\usepackage{subcaption}

\hyphenpenalty=5000

\usepackage{comment}

\hypersetup{colorlinks=true,
	linkcolor=teal,
	citecolor=orange}

\theoremstyle{plain}
\newtheorem{lem}{Lemma}[section]
\newtheorem{prop}[lem]{Proposition}
\newtheorem{thm}[lem]{Theorem}
\newtheorem{cor}[lem]{Corollary}

\AddToHook{env/exl/begin}{\crefalias{lemma}{exl}}
\AddToHook{env/thm/begin}{\crefalias{lemma}{thm}}

\theoremstyle{definition}
\newtheorem{defi}[lem]{Definition}
\newtheorem{exl}[lem]{Example}

\theoremstyle{plain}
\newtheorem{thmintro}{Theorem}

\newtheorem*{probintro*}{Problem}

\newtheorem*{corintro*}{Corollary}

\numberwithin{equation}{section}

\crefname{enumi}{}{}
\crefname{equation}{}{}

\crefname{exl}{Example}{Examples}

\crefname{thm}{Theorem}{Theorems}


\newcommand{\R}{\mathbb{R}}			
\renewcommand{\S}{\mathbb{S}}		
\newcommand{\Gr}{\mathrm{Gr}}		
\newcommand{\DD}{\mathbb{D}}		


\newcommand{\indf}{\mathbbm{1}}		

\newcommand{\K}{\mathcal{K}}		


\DeclareMathOperator{\SO}{SO}		
\DeclareMathOperator{\OO}{O}		

\newcommand{\abs}[1]{\lvert #1 \rvert}						
\newcommand{\norm}[1]{\lVert #1 \rVert}						
\newcommand{\pair}[2]{\left\langle #1,#2 \right\rangle}				

\BibSpec{article}{%
	+{}{\PrintAuthors}  		{author}
	+{,}{ \textit}     		{title}
	+{,}{ }             		{journal}
	+{}{ \textbf}       		{volume}
	+{}{ \parenthesize} 		{date}
	+{}{, no. } 		{number}
	+{,}{ }      	      		{conference}
	+{,}{ }      	      		{book}
	+{,}{ }            		{pages}
	+{,}{ }            	 	{note}
	+{,}{ }            	 	{status}
	+{,}{  \texttt } {eprint}
	+{.}{}              {transition}
}

\BibSpec{book}{%
	+{}{\PrintAuthors}  {author}
	+{,}{ \textit}      {title}
	+{,}{ }             {publisher}
	+{,}{ }             {place}
	+{,}{ }             {date}
	+{.}{}              {transition}
}

\BibSpec{incollection}{%
	+{}  {\PrintAuthors}                {author}
	+{,} { \textit}                     {title}
	+{.} { }                            {part}
	+{:} { \textit}                     {subtitle}
	+{,} { \PrintContributions}         {contribution}
	+{,} { \PrintConference}            {conference}
	+{,} { }                            {booktitle}
	+{,} { pp.~}                        {pages}
	+{,}{ }       		{series}
	+{}{ \textbf}       		{volume}
	+{,} { }                            {publisher}
	+{,} { }                 {date}
	+{,} { }                            {status}
	+{,} { \PrintDOI}                   {doi}
	+{,} { available at \eprint}        {eprint}
	+{}  { \parenthesize}               {language}
	+{}  { \PrintTranslation}           {translation}
	+{;} { \PrintReprint}               {reprint}
	+{.} { }                            {note}
	+{.} {}                             {transition}
}

\usepackage{xcolor}
\setlength{\marginparwidth}{2cm}

\begin{document}
	\title[The Christoffel problem for the disk area measure]{The Christoffel problem\\ for the disk area measure}
	\author{Leo Brauner}
	\address{Institut f\"ur Mathematik \newline%
		\indent Goethe-Universit\"at Frankfurt \newline%
		\indent Robert-Mayer-Str. 10, 60325 Frankfurt am Main, Germany}
	\email{brauner@math.uni-frankfurt.de}
	
	\author{Georg C.\ Hofst\"atter}
	\address{Institut f\"ur diskrete Mathematik und Geometrie \newline%
		\indent Technische Universit\"at Wien \newline%
		\indent Wiedner Hauptstrasse 8-10/1046, 1040 Wien, Austria}
	\email{georg.hofstaetter@tuwien.ac.at}
	
	\author{Oscar Ortega-Moreno}
	\address{Departamento de Matem\'aticas
        \newline%
		\indent CUNEF Universidad
        \newline%
		\indent Madrid, Spain}
	\email{oscar.ortegamoreno@cunef.edu}
	
	\begin{abstract}
		The mixed Christoffel problem asks for necessary and sufficient conditions for a Borel measure on the Euclidean unit sphere to be the mixed area measure of some convex bodies, all but one of them are fixed.
        We consider the case in which the reference bodies are $(n-1)$-dimensional disks lying in a fixed hyperplane. We obtain an integral representation that reconstructs the support function of a convex body from its disk area measure, without any regularity assumptions. In the smooth setting, we reformulate the problem as a linear differential equation on the sphere, and derive a necessary and sufficient condition on the density of the disk area measure guaranteeing both convexity and regularity of the solution.
	\end{abstract}
	
	\maketitle

	\thispagestyle{empty}

	\section{Introduction}
	\label{sec:intro}
	For a convex body (convex, compact set) $K \subset \R^n$ with smooth boundary, the \emph{first-order area measure}, denoted $S_1(K, \cdot)$, is the absolutely continuous measure on the unit sphere $\S^{n-1}$ with density given by the mean radius of curvature of $K$ as a function of the outer unit normal vector. The definition of $S_1(K, \cdot)$ can be extended to general convex bodies via variations of the \emph{surface area measure} $S_{n-1}(K, \cdot)$, that is,
	\begin{equation}\label{eq:area_meas_isotropic_def}
		S_1(K,\beta)
		= \frac{1}{(n-1)!}\left.\left(\frac{d}{dt}\right)^{\! n-2}\right|_{t=0^+} S_{n-1}(K + tB^n,\beta),
	\end{equation}
	for every Borel set $\beta \subseteq \S^{n-1}$, where $B^n$ denotes the Euclidean unit ball. Here, the surface area measure $S_{n-1}(K,\beta)$ evaluated at a Borel set $\beta$ is given by the Hausdorff measure $\mathcal{H}^{n-1}$ of all boundary points of $K$ with outer unit normal in $\beta$. We refer to \cite{Schneider2014}*{Sec.~4} for a more detailed exposition.
	
	Dating back to the nineteenth century with the work of Christoffel~\cite{Christoffel1865}, it was an important question in convex geometry, nowadays known as the \emph{Christoffel problem}, to determine the class of Borel measures on the sphere that arise as $S_1(K, \cdot)$ for some convex body $K$. After Christoffel's work \cite{Christoffel1865}, treating the three dimensional case, a complete solution was obtained at the end of the 1960s with the works of Berg~\cite{Berg1969} and Firey~\cites{Firey1967,Firey1968} (see also \cite{Li2021} for a more recent approach). Special cases were treated for smooth bodies by Pogorelov~\cite{Pogorelov1953} and for polytopes by Schneider~\cite{Schneider1977b}.
	
	The key to solving the Christoffel problem was the observation that in the sense of distributions on $\S^{n-1}$, the area measure $S_1(K, \cdot)$ can be obtained from the support function $h_K$ of $K$ by means of a differential operator: 
	\begin{align}\label{eq:area_meas_PDE}
		S_1(K, {}\cdot{}) 
        = \frac{1}{n-1} \Delta_{\S}h_K + h_K,
	\end{align}
     where $\Delta_{\S}$ denotes the Laplace--Beltrami operator on $\S^{n-1}$. Using spherical harmonics, Berg~\cite{Berg1969} constructed a family of functions $g_n\in C^\infty(-1,1)$, $n \geq 2$, that serve as Green functions of the operator. Here and in the following, we denote by $\K(\R^n)$ the set of convex bodies in $\R^n$, by $s(K)$ the \emph{Steiner point} of a body $K\in\K(\R^n)$ (see \cref{sec:preliminaries}), and call a Borel measure $\mu$ on $\S^{n-1}$ \emph{centered} if $\int_{\S^{n-1}} u \, \mu(du) = o$. 
     Berg's result~\cite{Berg1969}*{Thm.~5.3} can be stated as follows.
     
	\begin{thm}[{\cite{Berg1969}}]\label{thm:Berg_Christoffel}
		Let $\mu$ be a centered, non-negative, finite Borel measure on $\S^{n-1}$. Then there exists a convex body $K\in\K(\R^n)$ with $\mu=S_1(K,{{}\cdot{}})$ if and only if
		\begin{equation}\label{eq:Berg_Christoffel}
			h(u)=\int_{\S^{n-1}} g_n(\pair{u}{v})\, \mu(dv),\qquad u\in\S^{n-1},
		\end{equation}
		is a support function. In that case $h=h_{K-s(K)}$.
	\end{thm}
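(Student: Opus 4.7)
My plan is to invert the Christoffel operator $\Box_n$ via the spherical harmonic decomposition of $L^2(\S^{n-1})$. Since $\Box_n = \tfrac{1}{n-1}\Delta_{\S} + \Id$ is rotation-invariant and self-adjoint, it acts diagonally on spherical harmonics: a harmonic $Y_k$ of degree $k$ satisfies $\Box_n Y_k = \lambda_k Y_k$ with $\lambda_k = 1 - \tfrac{k(k+n-2)}{n-1}$, so $\lambda_0 = 1$, $\lambda_1 = 0$, and $\lambda_k \neq 0$ for $k \geq 2$. Together with the distributional identity $\Box_n h_K = S_1(K,{{}\cdot{}})$ recalled in the introduction, this suggests constructing a function $g_n$ on $(-1,1)$ whose Gegenbauer coefficients (in the sense of Funk--Hecke) equal $\lambda_k^{-1}$ for $k \neq 1$ and vanish for $k = 1$. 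The associated spherical convolution operator
\[ G_n \nu(u) := \int_{\S^{n-1}} g_n(\pair{u}{v})\, \nu(dv) \]
will then act on spherical harmonics as $G_n Y_k = \lambda_k^{-1} Y_k$ for $k \neq 1$ and $G_n Y_1 = 0$.

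Granted such a $g_n$, I would next establish the two operator identities
\[ G_n \circ \Box_n = \Id - P_1 = \Box_n \circ G_n, \]
where $P_1$ denotes the projection onto degree-$1$ spherical harmonics. On smooth functions these are immediate from the harmonic expansion; extending them to continuous support functions and finite Borel measures is a density and weak-$*$ continuity argument once the integrability of $g_n$ is in hand. The theorem then reduces to bookkeeping. In the forward direction, $\mu = S_1(K,{{}\cdot{}}) = \Box_n h_K$ combined with self-adjointness gives $\int_{\S^{n-1}} v_i\, d\mu(v) = \int_{\S^{n-1}} h_K(u)\, \Box_n u_i\, du = 0$, so $\mu$ is automatically centered. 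Applying $G_n$ then yields
\[ \int_{\S^{n-1}} g_n(\pair{u}{v})\, \mu(dv) = (h_K - P_1 h_K)(u) = h_{K - s(K)}(u), \]
using that the degree-$1$ component of $h_K$ is $u \mapsto \pair{s(K)}{u}$; in particular this is a support function. Conversely, if the right-hand side of \eqref{eq:Berg_Christoffel} defines a support function $h = h_L$ for some convex body $L$, applying $\Box_n$ gives $S_1(L,{{}\cdot{}}) = \Box_n G_n \mu = \mu - P_1 \mu = \mu$, the last step being where the centering hypothesis is used.

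The main obstacle is the existence of the integral kernel $g_n$ itself: the prescribed Gegenbauer coefficients $\lambda_k^{-1}$ decay only like $k^{-2}$, so the formal series diverges at $\pm 1$ and no direct construction from Funk--Hecke alone is available. Berg's technical contribution in \cite{Berg1969} is to produce such a $g_n$ explicitly by a recursion on the dimension $n$, realising it as an iterated integral exhibiting only a mild (logarithmic) singularity as $\pair{u}{v} \to \pm 1$. Once the pointwise behaviour and local integrability of $g_n$ are controlled, justifying the identities $G_n \circ \Box_n = \Id - P_1 = \Box_n \circ G_n$ on arbitrary continuous support functions and finite measures, and hence the equivalence of the theorem, is a routine density argument.
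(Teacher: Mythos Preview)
The paper does not give its own proof of this theorem: it is quoted from Berg~\cite{Berg1969} as a known result and used as a black box in the proof of \cref{mthm:Christoffel_disk_gnrl}. There is therefore nothing in the paper to compare your argument against.

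That said, your sketch is correct and faithfully reproduces Berg's original strategy: diagonalise $\Box_n$ on spherical harmonics, observe the kernel is exactly the degree-one space, invert on its orthogonal complement via a Funk--Hecke kernel $g_n$ with Gegenbauer coefficients $\lambda_k^{-1}$, and read off the two directions of the equivalence from $G_n\circ\Box_n=\Id-P_1=\Box_n\circ G_n$. You have also correctly located the genuine analytic difficulty---the $k^{-2}$ decay of $\lambda_k^{-1}$ does not give an absolutely convergent Gegenbauer series, so one cannot simply define $g_n$ by its expansion---and correctly attributed its resolution to Berg's explicit recursive construction yielding a locally integrable kernel with a logarithmic singularity at $t=\pm 1$. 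One small point: in the converse direction you should also note that $G_n\mu$ is automatically continuous (indeed $h$ is defined pointwise by the integral), which is needed before invoking the characterisation of support functions; this follows from the integrability of $g_n$ and dominated convergence, so it is not a gap but worth stating.
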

	
	Recently, more general versions of the Christoffel problem, involving mixed area measures, have attracted considerable attention. Mixed area measures are closely related to mixed volumes of convex bodies, both of which are fundamental notions in convex geometry (see, e.g.~\cite{Schneider2014}). The first-order area measure $S_1(K,\cdot)$ of a body $K\in\K(\R^n)$ is an instance of this.
    Replacing the unit ball $B^n$ in \cref{eq:area_meas_isotropic_def} by an arbitrary convex body $C \in \K(\R^n)$, we obtain a rich family of mixed area measures:
	\begin{equation}\label{eq:area_meas_anisotropic_def}
		S_1(K,C;\beta)
		= \frac{1}{(n-1)!}\left.\left(\frac{d}{dt}\right)^{\! n-2}\right|_{t=0^+} S_{n-1}(K + tC,\beta)
	\end{equation}
	for every Borel set $\beta \subseteq \S^{n-1}$. In this article, we consider the following Christoffel type problem. 
	
	\begin{probintro*}
		Given $C\in \K(\R^n)$, find necessary and sufficient conditions for a Borel measure $\mu$ on $\S^{n-1}$ such that $\mu = S_1(K,C; {}\cdot{})$ for some convex body $K \in \K(\R^n)$.
	\end{probintro*}

    This is an instance of the \emph{mixed Christoffel problem}, where the role of the reference body $C$ is taken by a collection of reference bodies $C_1,\ldots,C_{n-2}$. For bodies that are symmetric about an axis, this problem has been completely solved by the authors of this article~\cite{Brauner2025a}, in the spirit of the classical work of Firey~\cite{Firey1970}. The corresponding problem for radially symmetric convex functions and mixed Monge--Ampère measures was treated by Mussnig--Ulivelli~\cite{Mussnig2025}. The two approaches are independent of each other, although both heavily exploit the imposed symmetry.

    In the general setting, recently, a significant advancement was achieved by Colesanti--Focardi--Guan--Salani~\cite{Colesanti2025}, who identified sufficient conditions characterizing a rich class of mixed area measures. Their approach is based on the techniques of Guan--Ma~\cite{Guan2003} and relies on smoothness assumptions. To date, the only instance of the mixed Christoffel problem where a complete solution is known, including necessary and sufficient conditions, remains the classical Christoffel problem (where $C=B^n$). While the solution of Berg and Firey is somewhat formal, the work of Li--Wan--Wang~\cite{Li2021} provides a more readily checkable condition via the fundamental solution to the Laplace equation.

    \subsection*{Main results}

    The aim of this article is to fully resolve another instance of the mixed Christoffel problem, namely where $C$ is the $(n-1)$-dimensional unit disk perpendicular to the $n$-th coordinate vector $e_n$, denoted as~$\DD$. Such \emph{disk area measures} involving the disk~$\DD$ turn out to possess particularly nice geometric properties, and have proved useful in a variety of applications (see \cites{Brauner2024a,Brauner2025a,Hug2024,Hug2024a,Mussnig2025,Colesanti2022a}). In particular, for axially symmetric bodies, the authors of this article reduce the mixed Christoffel problem (and more generally, the mixed Christoffel--Minkowski problem) to the one involving the disk area measure~\cite{Brauner2025a}.
    
    Our contribution to this problem is twofold. First, in the same spirit as Berg and Firey, we establish an integral representation expressing the support function of $K$ in terms of the disk area measure~$S_1(K,\DD;\cdot)$, without any regularity assumptions on $K$. Second, we treat the problem as a differential equation and derive an efficient condition on the density of the disk area measure to determine both the convexity and regularity of the solution.   
	
	In order to state our first main result, we let $\Gr_2(\R^n, e_n)$ denote the Grassmanian of all two-dimensional linear subspaces of $\R^n$ containing the $n$-th coordinate vector $e_n$, and let $\pi: \S^{n-1} \setminus\{\pm e_n\} \to \Gr_{2}(\R^n, e_n)$ be the map that assigns to every $u \in \S^{n-1}\setminus\{\pm e_n\}$ the unique plane $e_n \vee u$ spanned by $u$ and $e_n$.
	Given a Borel measure $\mu$ on $\S^{n-1}$, if its pushforward $\pi_\ast \mu$ is absolutely continuous with continuous density, then the measure $\mu$ disintegrates into a family $(\mu_E)_{E \in \Gr_{2}(\R^n, e_n)}$ of measures on $\S^1(E)$ such that for any function $f\in C(\S^{n-1})$,
	\begin{align}\label{eq:disintIntro}
		\int_{\S^{n-1}\setminus \{\pm e_n\}} f(u) \, \mu(du) = \int_{\Gr_2(\R^n, e_n)} \int_{\S^1(E)\setminus\{\pm e_n\}} f(v) \, \mu_E(dv)\, dE.
	\end{align}
	Note that, as long as $\mu(\{\pm e_n\}) = 0$, we may identify $\mu$ with its restriction to $\S^{n-1}\setminus\{\pm e_n\}$; see \cref{sec:disint} for the details on the construction of this disintegration. 
    
	\begin{thmintro}\label{mthm:Christoffel_disk_gnrl}
		Let $\mu$ be a non-negative, centered, finite Borel measure on $\S^{n-1}$ and suppose that $\mu(\{\pm e_n\})=0$.
		Then there exists a convex body $K\in\K(\R^n)$ with $\mu=S_1(K,\DD,{}\cdot{})$ if and only if
		\begin{enumerate}[label=\upshape(\roman*)]
			\item\label{it:ChristDiskAbsCont} The measure $\pi_\ast \mu$ is absolutely continuous with a continuous density,
			\item\label{it:ChristDiskCent}  $\int_{\S^1(E)}v\,\mu_E(dv)=0$ for a.e.\ $E\in\Gr_2(\R^n,e_n)$, and
			\item\label{it:ChristDiskIntCond} there exists a support function $h$ of a convex body such that
			\begin{equation}\label{eq:Christoffel_disk_gnrl}
				h(u)- \frac{1}{\pi}\int_{\S^1(e_n\vee u)} h(v)\pair{u}{v}\, dv = \int_{\S^1(e_n\vee u)}\sqrt{1-\pair{u}{v}^2}(\pi - \arccos\pair{u}{v}) \,\mu_{e_n\vee u}(dv),			
			\end{equation}
			for a.e.\ $u\in\S^{n-1}$.
		\end{enumerate}

		In this case, $K$ is unique up to a translation, and $h = h_{K-v}$ for some $v \in \R^n$.
	\end{thmintro}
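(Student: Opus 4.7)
The plan is to reduce the problem to Berg's classical Christoffel theorem in dimension two, applied fiberwise over the Grassmannian $\Gr_2(\R^n, e_n)$. The crucial ingredient is a slicing formula of the form
\begin{equation*}
S_1(K, \DD; \beta) = c_n \int_{\Gr_2(\R^n, e_n)} S_1^{(E)}(K|E; \beta \cap \S^1(E)) \, dE \qquad (\ast)
\end{equation*}
valid for every $K \in \K(\R^n)$ and every Borel set $\beta \subseteq \S^{n-1}\setminus\{\pm e_n\}$, where $K|E$ denotes the orthogonal projection of $K$ onto $E$, $S_1^{(E)}$ is the classical two-dimensional first area measure in $E$, and $c_n > 0$ is a constant. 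To establish $(\ast)$, I would test against a smooth $\phi$ via $\int \phi\, dS_1(K,\DD;\cdot) = n V(\phi, K, \DD[n-2])$ and exploit that $\DD$ is an $(n-1)$-dimensional body in $e_n^\perp$, invariant under $\SO(e_n)$: the resulting mixed volume reduces, after averaging over rotations around $e_n$, to an integral of planar mixed volumes over the $2$-planes $E \in \Gr_2(\R^n, e_n)$, which is precisely the integrated form of $(\ast)$. Smoothness of $K$ and $\phi$ can be removed by standard approximation.

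Granted $(\ast)$, necessity of \ref{it:ChristDiskAbsCont}--\ref{it:ChristDiskIntCond} is direct. If $\mu = S_1(K, \DD; \cdot)$, then $\pi_\ast \mu$ has density proportional to the perimeter of $K|E$, continuous in $E$, which yields \ref{it:ChristDiskAbsCont}. Each slice $\mu_E = c_n S_1^{(E)}(K|E;\cdot)$ is centered as the first area measure of a planar convex body, giving \ref{it:ChristDiskCent}. For \ref{it:ChristDiskIntCond}, take $h = h_K$, note $h|_{\S^1(E)} = h_{K|E}$, and apply Berg's theorem in dimension two to obtain
\begin{equation*}
h_{K|E}(u) - \pair{u}{s(K|E)} = \int_{\S^1(E)} g_2(\pair{u}{v})\, \mu_E(dv), \qquad u \in \S^1(E).
\end{equation*}
Identifying $\pair{u}{s(K|E)}$ with $\frac{1}{\pi}\int_{\S^1(E)} h(v)\pair{u}{v}\,dv$ via the two-dimensional Steiner-point formula, and using \ref{it:ChristDiskCent} to absorb the linear centering term of $g_2$, yields exactly \eqref{eq:Christoffel_disk_gnrl}.

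For sufficiency, let $K$ be the convex body whose support function is the $h$ provided by \ref{it:ChristDiskIntCond}. Restricting \eqref{eq:Christoffel_disk_gnrl} to $u \in \S^1(e_n \vee u)$ and using \ref{it:ChristDiskCent} to drop the linear term of $g_2$ yields the two-dimensional Berg identity for $\mu_E$ at a.e.\ $E$, whence Berg's 2D theorem gives $\mu_E = c_n S_1^{(E)}(K|E;\cdot)$. Reassembling via the disintegration \eqref{eq:disintIntro} and comparing with $(\ast)$ gives $\mu = S_1(K, \DD; \cdot)$. For uniqueness, if $K, K'$ share the same $\mu$, then $(\ast)$ forces $K|E$ and $K'|E$ to differ by a translation within $E$ for a.e.\ $E$; a standard compatibility argument (a convex body is determined up to translation by its projections onto $2$-planes through a fixed axis) upgrades this to a single $v \in \R^n$ with $K' = K + v$.

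The main obstacle is the slicing formula $(\ast)$. Although the $\SO(e_n)$-symmetry of $\DD$ makes such a disintegration natural, pinning down the exact constant $c_n$ and carefully handling the degeneration at the poles $\pm e_n$---where the fiber $E = e_n \vee u$ is undefined, which is precisely why the hypothesis $\mu(\{\pm e_n\}) = 0$ is imposed---requires a delicate mixed-volume computation. Secondary difficulties include the upgrade from ``planar translations almost everywhere'' to a single global translation in the uniqueness statement, and a measure-theoretic check that the disintegration of $\mu$ provided by \ref{it:ChristDiskAbsCont} coincides with the one naturally induced on $S_1(K, \DD; \cdot)$ by $(\ast)$; both should be manageable once absolute continuity with continuous density is secured.
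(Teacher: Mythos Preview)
Your approach to both directions of the existence statement is essentially the paper's: the slicing identity $(\ast)$ is exactly the Kubota-type formula of Hug--Mussnig--Ulivelli that the paper quotes as \cref{eq:zonal_Kubota_polarized_i=1} (rather than rederiving it), and the fiberwise application of Berg's two-dimensional theorem, together with the identification of $\tfrac{1}{\pi}\int_{\S^1(E)} h(v)\pair{u}{v}\,dv$ with $\pair{u}{s(K|E)}$, is carried out just as you outline.

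The uniqueness argument is where you diverge. You propose to conclude from $(\ast)$ and the a.e.\ uniqueness of disintegrations that $S_1^E(K|E,{}\cdot{})=S_1^E(K'|E,{}\cdot{})$ for a.e.\ $E$ (using $\mu(\{\pm e_n\})=0$ to match the pole masses), then apply planar Christoffel uniqueness to get $K'|E=K|E+v_E$, and finally globalize. This last step is fine once one notes that Steiner points commute with orthogonal projection, $s(K|E)=s(K)|E$, which forces $v_E=(s(K)-s(K'))|E$; continuity of support functions then upgrades the a.e.\ identity $h_K|_E=h_{K'+v}|_E$ to all of $\S^{n-1}$. The paper takes a different route: it integrates $h_K$ and $h_L$ against the common measure to force equality in Minkowski's quadratic inequality with reference body $\DD$, and then invokes the Shenfeld--van~Handel equality characterization, using that every direction is $1$-extremal for the disk and that $\mu(\{\pm e_n\})=0$ forces $F(K,\pm e_n)$ to be singletons. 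Your argument is more elementary and self-contained within the two-dimensional reduction; the paper's is shorter once the heavy external input is granted, and illustrates how the pole hypothesis enters through the face structure rather than through the disintegration.
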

	
	Let us comment on the condition that the measure has no mass at the poles $\pm e_n$. This is due to the fact that the solution of the Christoffel problem for the disk is not unique as soon as there is a face in either direction $\pm e_n$. Indeed, whenever $K \in \K(e_n^\perp)$, its disk area measure $S_1(K, \DD, \cdot) = V^{e_n^\perp}(K, \DD^{[n-2]})(\delta_{e_n} + \delta_{-e_n})$ only depends on the mean width of $K$.
	
	On a technical level, mass at the poles $\pm e_n$ raises the problem of how to define a disintegration similar to \cref{eq:disintIntro}. Indeed, the domains of the measures $\mu_E$ overlap at the poles $\pm e_n$, so a procedure is required to distribute the mass $\mu(\{\pm e_n\})$ among the measures $\mu_E$. However, this amounts to determining the (highly non-unique) geometry of the faces of $K$ in the directions $\pm e_n$. As soon as a disintegration satisfying \cref{eq:Christoffel_disk_gnrl} is constructed, the steps in the proof of \cref{mthm:Christoffel_disk_gnrl} can be repeated to yield the solution.

    Let us emphasize that \cref{mthm:Christoffel_disk_gnrl} provides a set of conditions that are necessary and sufficient without requiring any smoothness of $K$, and that it allows to retrieve the support function of $K$ from its disk area measure. However, as in the work of Berg and Firey, it does not offer a practical way to check convexity or to determine the regularity of $K$ depending on the regularity of~$\mu$. Hence we also pursue a second approach, where we consider the problem as a linear differential equation, similar to~\cref{eq:area_meas_PDE}.
    
    If we assume $K\in\K(\R^n)$ to be of class $C^2_+$ (that is, its boundary is a $C^2$~submanifold of $\R^n$ with everywhere positive Gauss curvature), then its support function $h_K$ is a $C^2(\S^{n-1})$ function. Parametrizing $\S^{n-1}$ in polar coordinates $u=\cos\theta e_n + \sin\theta w$, where $\theta\in [0,\pi]$ and $w\in\S^{n-2} = \S^{n-2}(e_n^\perp)$, the disk area measure $S_1(K,\DD,{}\cdot{})$ can be expressed in terms of $h_K$ as
    \begin{equation}\label{eq:disk_area_meas_PDE}
        S_1(K,\mathbb{D},du)
        = \frac{1}{n-1}\big(\partial_\theta^2 h_K(\theta,w) + h_K(\theta,w)\big) d\theta\, dw.
    \end{equation}
    Notably, the lower dimensionality of the disk $\DD$ causes the associated partial differential equation to degenerate into a family of ordinary differential equations along the meridians of the unit sphere, allowing us to employ standard theory of ordinary differential equations. 
    

    As in \cref{mthm:Christoffel_disk_gnrl}, the core of the argument consists in gluing together the solutions constructed on the meridians to obtain a global convex solution on $\S^{n-1}$. The advantage of working in the class $C^2_+$ is that the convexity of the resulting solution $K$ can be encoded as the positive definiteness of the Hessian of its support function. Our second main result provides a practical condition that fully characterizes disk area measures of $C^2_+$~regular convex bodies.
    
	\begin{thmintro}\label{mthm:Christoffel_disk_C2+}
        Let $q\in C([0,\pi]\times \S^{n-2})$ be strictly positive and $q(\theta,\,\cdot\,)\in C^2(\S^{n-2})$ for all $\theta \in [0,\pi]$.
        There exists some convex body $K\in\K(\R^n)$ of class $C^2_+$ such that $q(\theta,w)d\theta\, dw=S_1(K,\DD,du)$ if and only if there exist $t\in\R$ and $x \in\R^{n-1}$ such that for all $w\in\S^{n-2}$,
        \begin{equation*}
            \int_{0}^{\pi} \sin\theta \, q(\theta,w) \,d\theta = t \qquad \text{and} \qquad 
            \int_{0}^\pi \cos\theta \, q(\theta, w) \,d\theta = \langle x, w\rangle.
    \end{equation*}
    and for all $(\theta,w)\in (0,\pi) \times \S^{n-2}$ and unit tangent vectors $\xi \in T_w \S^{n-2}$,
        \begin{align*}
        \begin{split}
            &\sin^2\!\theta\, q(\theta,w)\int_{0}^\theta[\sin(\theta - \sigma)\xi^\top\nabla^2_{\S^{n-2}}q(\sigma,w) \,\xi  +  \sin\theta\cos\sigma q(\sigma,w) ] \,d\sigma \\
            &\qquad > \bigg(\int_{0}^\theta\sin\sigma\, \xi^{\top}\nabla^{\S^{n-2}}q(\sigma,w)\,d\sigma\bigg)^{\! 2}\! .
            \end{split}
        \end{align*}
        In this case, the body $K$ is unique up to translations, and $h_{K-v}=h$ for some $v\in\R^n$, where
        \begin{equation*}
            h(\theta,w) = (n-1)\int_{0}^\theta \sin(\theta - \sigma)q(\sigma, w) \, d\sigma , \qquad (\theta,w)\in [0,\pi]\times\S^{n-2}.
        \end{equation*}
    \end{thmintro}
    

    \subsection*{Plan of the article} \cref{mthm:Christoffel_disk_gnrl} is proved in \cref{sec:convol_rep} and \cref{mthm:Christoffel_disk_C2+} is proved in \cref{sec:convCritC2p}. The necessary background is given wherever it is needed first.
 
	\section{A convolution representation}\label{sec:convol_rep}

	Our proof of \cref{mthm:Christoffel_disk_gnrl} is inspired by the the Kubota-type formula of Hug, Mussnig, and Ulivelli~\cite{Hug2024} in the instance where $i=1$: For a convex body $K\in\K(\R^n)$ and a bounded Borel function $f:\S^{n-1}\to\R$,
	\begin{equation}\label{eq:zonal_Kubota_polarized_i=1}
		\int_{\S^{n-1}}  f(u)\, S_{1}(K,\DD,du)
		= \frac{\kappa_{n-1}}{2}  \int_{\Gr_{2}(\R^n,e_n)} \!\int_{\S^{1}(E)} \! f(u)\, S^E_{1}(K|E,du) \, dE.
	\end{equation}
	This integral geometric formula expresses the mixed area measure $S_1(K,\DD,{}\cdot{})$ as an average of surface area measures of two-dimensional projections of $K$. The core idea of our argument is to compare \cref{eq:zonal_Kubota_polarized_i=1} to the disintegration \cref{eq:disintIntro} of the measure $\mu$ to relate the mixed Christoffel problem with the disk to the classical Christoffel problem in two dimensions.
	
	By \cref{thm:Berg_Christoffel}, we explicitly solve the $2$-dimensional Christoffel problem for the measures $\mu_E$, giving bodies $K_E \in \K(E)$, $E \in \Gr_2(\R^n, e_n)$. This is where the integral kernel
    \begin{align*}
		g_2(t) = \sqrt{1-t^2}(\pi - \arccos t) + c_n t,
	\end{align*}
	originates, with $c_n\in\R $ chosen in such a way to make the function $u \mapsto g_2(\pair{e_n}{u})$ centered on $\S^{n-1}$.
    If there is a body $K \in \K(\R^n)$ such that every $K_E$ arises as the orthogonal projection $K|E$ of $K$ onto $E$, then we can conclude $\mu = S_1(K,\DD; {{}\cdot{}})$. However, as the subspaces $E$ intersect only in the line spanned by $e_n$, compatibility of the $K_E$ is no issue; convexity is asserted by condition \cref{it:ChristDiskIntCond} in \cref{mthm:Christoffel_disk_gnrl}.
	
	Let us point here that a similar strategy is promising for the intermediate Christoffel--Minkowski problems with the disk as reference body, replacing \cref{eq:zonal_Kubota_polarized_i=1} by the $i$-homogeneous counterpart from \cite{Hug2024}. However, in this case, the subspaces do intersect, which directly implies convexity but makes compatibility an issue.
	
	\medskip
	
	In the following, we first recall some classical facts from convex geometry, in particular on mixed volumes and mixed area measures that will be needed later on. Then we show the existence of the disintegration \cref{eq:disintIntro}. In the last section we finally prove \cref{mthm:Christoffel_disk_gnrl}.
	
	\subsection{Preliminaries}\label{sec:preliminaries}
	As a general reference on this section, we refer to the monographs by Gardner~\cite{Gardner2006} and Schneider~\cite{Schneider2014}. 

    First, the \emph{Steiner point} (see, e.g., \cite{Schneider2014}*{p.~50}) of a convex body $K \in \K(\R^n)$ is defined as
    \begin{equation}\label{eq:Steiner_point}
		s(K)
		= \frac{1}{\kappa_n}\int_{\S^{n-1}} h_K(u)u\, du\in\R^n.
	\end{equation}
    
	Next, the mixed volume $V(K_1, \dots, K_n)$, $K_1, \dots, K_n \in \K(\R^n)$ is defined as the suitable coefficient of the homogeneous polynomial obtained by Minkowski addition
	\begin{align*}
		V(\lambda_1 K_1 + \dots + \lambda_n K_n) = \sum_{j_1,\dots,j_{n}=1}^n \lambda_{j_1}\cdots\lambda_{j_n} V(K_1, \dots, K_n), \qquad \lambda_1, \dots, \lambda_n \geq 0.
	\end{align*}
	The mixed volume can be represented (or localized) using the Riesz representation theorem,
	\begin{equation}\label{eq:MixedVol_Integral}
		V(L,K_1,\ldots,K_{n-1})
		= \frac{1}{n} \int_{\S^{n-1}} h_{L}(u)\, S(K_1,\ldots,K_{n-1};du), \quad L \in \K(\R^n),
	\end{equation}
	defining a centered, non-negative Borel measure $S(K_1, \ldots, K_{n-1};{{}\cdot{}})$ on $\S^{n-1}$, called the mixed area measure of $K_1, \dots, K_{n-1} \in \K(\R^n)$. Moreover, by definition, it is $1$-homogeneous and translation-invariant in every component, as well as symmetric under permuting the entries. For $K_1 = \dots = K_i = K$ and $K_{i+1} = \dots = K_{n-1} = B^n$, $0\leq i \leq n$, and re-normalizing, we obtain the $i$th intrinsic volume of $K \in \K(\R^n)$,
	\begin{equation}\label{eq:MixedVol_IntrinsicVol}
		V_i(K)
		= \frac{\binom{n}{i}}{\kappa_{n-i}}V(K^{[i]},(B^n)^{[n-i]}).
	\end{equation}
	Here, we denote by $K^{[i]}$ the $i$-tuple $(K, \dots, K)$ with $K$ repeated $i$-times.

	If all the bodies $K_1\ldots,K_{n-1}$ are contained in the hyperplane $u^\perp$, then
	\begin{equation}\label{eq:MixedAreaMeas_hyperplane}
		S(K_1,\ldots,K_{n-1};{}\cdot{})
		= V^{u^\perp}\!(K_1,\ldots,K_{n-1})(\delta_u + \delta_{-u}).
	\end{equation}
	
	Next, recall that mixed area measures are \emph{locally determined}. Indeed, $S(K_1, \dots, K_{n-1}; \beta)$ depends only on $\tau(K_1, \beta), \dots, \tau(K_{n-1}, \beta)$, where the \emph{reverse spherical image} $\tau(K, \beta)$ of $K \in \R^n$ at a Borel set $\beta \subseteq \S^{n-1}$ is defined as
	\begin{equation*}
		\tau(K,\beta)
		= \bigcup_{u\in\beta} F(K,u),
	\end{equation*}
	with $F(K,u)=\{x\in\R^n: h_K(u)=\pair{x}{u}\}$ the \emph{face} of $K$ in direction $u \in \S^{n-1}$.
	\begin{lem}[{\cite{Schneider2014}*{p.~215}}]\label{lem:MixedAreaMeas_locally_det}
		Let $K_1,K_1',\ldots,K_{n-1},K_{n-1}'\in\K(\R^n)$ and $\beta\subseteq\S^{n-1}$ be a Borel set such that $\tau(K_j,\beta)=\tau(K_j',\beta)$ for all $j\in\{1,\ldots,n-1\}$. Then
		\begin{equation*}
			S(K_1,\ldots,K_{n-1};\beta)
			= S(K_1',\ldots,K_{n-1}';\beta).
		\end{equation*}
	\end{lem}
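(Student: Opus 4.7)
The statement is classical (cited to Schneider), and I would reproduce the standard argument as follows. The starting point is the identity $S_{n-1}(K;\beta) = \mathcal{H}^{n-1}(\tau(K,\beta))$, which makes local determination manifest in the top-order case $K_1 = \cdots = K_{n-1}$. The plan is to bootstrap this to all mixed area measures via multilinearity of $S(\cdot,\ldots,\cdot;\beta)$ in each slot, combined with a reduction to polytopes.

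First, I would treat the case where all bodies are polytopes $P_j, P_j'$. For polytopes, the mixed area measure is a finite sum of Dirac masses at the outer unit normals $u$ of the facets of $P_1 + \cdots + P_{n-1}$, with weights of the form $V^{u^\perp}(F(P_1,u),\ldots,F(P_{n-1},u))$. The hypothesis $\tau(P_j,\beta) = \tau(P_j',\beta)$ asserts that $P_j$ and $P_j'$ share the \emph{same boundary patch over $\beta$} as subsets of $\R^n$, so that for each $u\in\beta$ the faces $F(P_j,u)$ and $F(P_j',u)$ coincide up to translation along the direction $u$; since such translations leave the mixed volume in $u^\perp$ invariant, the weights agree on all $u\in\beta$, proving the polytope case.

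To pass to general $K_j, K_j'\in\K(\R^n)$, I would approximate each pair by polytopes $P_j^{(k)} \to K_j$ and $(P_j')^{(k)} \to K_j'$ in Hausdorff metric. Mixed area measures are weakly continuous in each slot, so the polytope identity would pass to the limit provided the approximation preserves the condition $\tau(P_j^{(k)},\beta) = \tau((P_j')^{(k)},\beta)$. This preservation is the main obstacle I foresee, since the reverse spherical image is not continuous under Hausdorff convergence; one would need to choose compatible polytopes, for instance by simultaneously refining the common normal fan of $K_j$ and $K_j'$ on $\beta$. An alternative that bypasses the approximation is a direct distributional argument: testing $S(K_1,\ldots,K_{n-1};\cdot)$ against a support function $h_L$ via \cref{eq:MixedVol_Integral} recovers the mixed volume $V(L,K_1,\ldots,K_{n-1})$, and by restricting $h_L$ to functions supported near $\beta$ and exploiting that $V$ is a polynomial in Minkowski combinations whose coefficients depend (by the surface-area case) only on the local data $\tau(K_j,\beta)$, one matches up the two sides on $\beta$.
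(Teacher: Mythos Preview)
The paper does not supply its own proof of this lemma; it is quoted verbatim from Schneider's monograph, so there is nothing in the paper to compare your argument against. That said, your outline has one correct component and one genuine gap.

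Your polytope step is fine, and in fact stronger than you state: the hypothesis $\tau(K_j,\beta)=\tau(K_j',\beta)$ forces $F(K_j,u)=F(K_j',u)$ \emph{exactly} (not merely up to a translation along $u$) for every $u\in\beta$. Indeed, any $x\in F(K_j,u)\subseteq\tau(K_j,\beta)=\tau(K_j',\beta)\subseteq K_j'$ satisfies $\langle x,u\rangle=h_{K_j}(u)\leq h_{K_j'}(u)$, and symmetry gives $h_{K_j}(u)=h_{K_j'}(u)$ together with $F(K_j,u)\subseteq F(K_j',u)$; the reverse inclusion is the same argument. Note that this observation is valid for arbitrary convex bodies, not just polytopes.

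The gap is precisely the one you flag: passing from polytopes to general bodies by Hausdorff approximation does not preserve the hypothesis, and your sketch of a ``compatible'' approximation or a distributional workaround is not a proof as it stands. The standard remedy (and the one in Schneider) bypasses approximation altogether. Since $F(K+L,u)=F(K,u)+F(L,u)$, the face identity $F(K_j,u)=F(K_j',u)$ for $u\in\beta$ propagates to all Minkowski sub-sums, so $\tau\bigl(\sum_{j\in I}K_j,\beta\bigr)=\tau\bigl(\sum_{j\in I}K_j',\beta\bigr)$ for every $I\subseteq\{1,\dots,n-1\}$. One then invokes the polarization identity
\[
(n-1)!\,S(K_1,\dots,K_{n-1};\beta)=\sum_{\emptyset\neq I\subseteq\{1,\dots,n-1\}}(-1)^{n-1-|I|}\,S_{n-1}\Bigl(\textstyle\sum_{j\in I}K_j;\beta\Bigr),
\]
and the top-order case $S_{n-1}(K;\beta)=\mathcal{H}^{n-1}(\tau(K,\beta))$, which you already isolated, finishes the argument directly for general convex bodies.
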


	In order to deal with the uniqueness of the solution $K$ to the Christoffel problem involving the disk, we recall
	\emph{Minkowski's quadratic inequality}, a classical result from the Brunn--Minkowski theory. It states that for all convex bodies $K,L,C\in\K(\R^n)$,
	\begin{equation}\label{eq:Minkowskis_quadr_ineq}
		V(K,L,C^{[n-2]})^2 \geq V(K,K,C^{[n-2]})V(L,L,C^{[n-2]}).
	\end{equation}
	This inequality was first established by Minkowski~\cite{Minkowski1903} in three dimensions and later extended by Bonnesen and Fenchel~\cite{Bonnesen1987} to higher dimensions. Its equality conditions were settled only recently in a landmark paper by Shenfeld and van Handel~\cite{Shenfeld2022}.
	
	For our purposes, we only require the special case where $C$ is lower-dimensional (see~\cite{Shenfeld2022}*{Theorem~2.3}).	
	Here, a vector $u\in\R^{n-1}\setminus\{0\}$ is called a \emph{$1$-extremal} normal direction of $C$ if there do not exist linearly independent normal vectors $u_1,u_2,u_3\in\R^{n-1}\setminus\{0\}$ at a boundary point of $C$ such that $u=u_1+u_2+u_3$. 
	
	\begin{thm}[{\cite{Shenfeld2022}}]\label{thm:Minkowskis_quadr_eq_cases}
		Let $K,L,C\in\K(\R^n)$ be such that $C-C\subseteq e_n^\perp$ and $V(K,L,C^{[n-2]})>0$. Then equality holds in \cref{eq:Minkowskis_quadr_ineq}		
		if and only if $\tilde{L}:=\frac{V(K,L,C^{[n-2]})}{V(L,L,C^{[n-2]})}L$ satisfies that $K+F(\tilde{L},e_n)$ and $\tilde{L}+F(K,e_n)$ have the same supporting hyperplanes in all $1$-extremal normal directions of $C$.
	\end{thm}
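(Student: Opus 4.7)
The plan is to reformulate the equality case as the vanishing of a quadratic form, and then exploit the hyperplane structure of $C$ via the local determination property of mixed area measures. Setting $\tilde{L}$ as in the statement so that $V(K,\tilde{L},C^{[n-2]})=V(\tilde{L},\tilde{L},C^{[n-2]})$, equality in \cref{eq:Minkowskis_quadr_ineq} forces $V(K,\tilde{L},C^{[n-2]})=V(K,K,C^{[n-2]})$ as well. A bilinear expansion then gives the formal identity
\begin{equation*}
V(K-\tilde{L},\,K-\tilde{L},\,C^{[n-2]}) = 0,
\end{equation*}
which, by the integral representation \cref{eq:MixedVol_Integral}, can be rewritten as
\begin{equation*}
\int_{\S^{n-1}} (h_K - h_{\tilde L})\, d\bigl(S(K,C^{[n-2]};{{}\cdot{}}) - S(\tilde L,C^{[n-2]};{{}\cdot{}})\bigr) = 0.
\end{equation*}

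Since $C - C \subseteq e_n^\perp$, \cref{lem:MixedAreaMeas_locally_det} shows that $S(K,C^{[n-2]};{{}\cdot{}})$ and $S(\tilde{L},C^{[n-2]};{{}\cdot{}})$ are supported on the closure of the $1$-extremal normal directions of $C$ inside $\S^{n-1}\setminus\{\pm e_n\}$, together with the poles $\pm e_n$, where by \cref{eq:MixedAreaMeas_hyperplane} the atomic mass is governed by the faces $F(K,\pm e_n)$ and $F(\tilde{L},\pm e_n)$ (combined with $C$ on the hyperplane $e_n^\perp$). For the sufficiency direction, the hypothesis that $K+F(\tilde L,e_n)$ and $\tilde L+F(K,e_n)$ share supporting hyperplanes in all $1$-extremal normal directions of $C$ forces $h_K - h_{\tilde L}$ to agree with a linear function on the support of the relevant mixed area measures off the poles; at $\pm e_n$ the very same hypothesis balances the face contributions into the quadratic form, and a direct computation then produces the vanishing of the above integral, hence equality.

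The converse direction, which I expect to be the main obstacle, asks us to recover a pointwise supporting-hyperplane statement from the global vanishing of this quadratic form; this is the substantive content of \cite{Shenfeld2022}. The strategy would be to apply a local Alexandrov--Fenchel inequality adapted to the degenerate reference tuple $C^{[n-2]}$ with $C\subseteq e_n^\perp$ (up to translation), and thereby characterize the kernel of the bilinear form $(M,N)\mapsto V(M,N,C^{[n-2]})$ acting on differences of support functions. The key claim to establish is that this kernel consists precisely of linear functions on the closure of the $1$-extremal locus together with face-translations at $\pm e_n$, which via a dimensional reduction to $e_n^\perp$ reduces the question to a spectral analysis of a Jacobi-type operator associated to $S(C^{[n-2]};{{}\cdot{}})$. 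Carrying out this spectral step rigorously is the deepest part of the argument and explains why the equality conditions remained open until this recent work; for the purposes of the present paper we take this theorem as given.
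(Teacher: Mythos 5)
This statement is an external result: the paper imports it from \cite{Shenfeld2022}*{Theorem~2.3} and gives no proof whatsoever, so your closing decision to ``take this theorem as given'' is exactly what the paper does. In that sense there is nothing to compare; the citation is the proof.

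However, the partial argument you sketch on the way would not stand on its own, and two steps deserve concrete flags. First, the assertion that $S(K,C^{[n-2]};{}\cdot{})$ is supported on the closure of the $1$-extremal normal directions of $C$ does not follow from \cref{lem:MixedAreaMeas_locally_det}: local determination compares two measures on a fixed Borel set and says nothing about supports. The characterization of the support of a mixed area measure in terms of extreme normal directions is itself a deep theorem (Schneider's conjecture, resolved only recently using the machinery of \cite{Shenfeld2022}), so you would be assuming something of comparable depth to what you are trying to prove. Second, in the sufficiency direction the hypothesis gives, at each $1$-extremal direction $u$, the identity $h_K(u)-h_{\tilde L}(u)=h_{F(K,e_n)}(u)-h_{F(\tilde L,e_n)}(u)$; the right-hand side is a difference of support functions of two faces, which are convex bodies in a translate of $e_n^\perp$ and need not be singletons, so $h_K-h_{\tilde L}$ is \emph{not} forced to be linear on the extremal locus, and the ``direct computation'' producing the vanishing of the quadratic form is not direct. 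Your reduction of the equality case to $V(K-\tilde L,K-\tilde L,C^{[n-2]})=0$ and your expectation that the converse requires a local Alexandrov--Fenchel/spectral analysis are the right intuitions about how \cite{Shenfeld2022} actually proceeds, but none of the substantive content is supplied; for the purposes of this paper the honest (and intended) move is simply to cite the theorem.
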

	
	Observe now that at every point $x\in\DD$, the corresponding normal cone $N(\DD,x)$ is either one- or two-dimensional. This implies that every direction $u\in\S^{n-1}$ is a $1$-extremal normal direction of $\DD$, so \cref{thm:Minkowskis_quadr_eq_cases} specializes for $C = \DD$ to the following statement.
	
	\begin{cor}\label{cor:MinkQuadEqDiskCases}
		Let $K,L\in\K(\R^n)$ be such that $V(K,L,\DD^{[n-2]})>0$. Then
		\begin{align*}
			V(K,L,\DD^{[n-2]})^2 = V(K,K,\DD^{[n-2]})V(L,L,\DD^{[n-2]})
		\end{align*}
		if and only if $K+F(\tilde{L},e_n) = \tilde{L}+F(K,e_n)$, where $\tilde{L}:=\frac{V(K,L,\DD^{[n-2]})}{V(L,L,\DD^{[n-2]})}L$.
	\end{cor}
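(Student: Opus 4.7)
The plan is to derive the corollary as a direct specialization of \cref{thm:Minkowskis_quadr_eq_cases} to $C = \DD$, and the work reduces to two tasks: verifying that every direction $u \in \S^{n-1}$ is $1$-extremal for $\DD$, and translating ``same supporting hyperplanes in all directions'' into equality of the two bodies.

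First, I would analyze the normal cones of $\DD$. Since $\DD$ is an $(n-1)$-dimensional disk lying in $e_n^\perp$, any point $x \in \DD$ falls into one of two cases: if $x$ lies in the relative interior of $\DD$, then $N(\DD,x) = \R e_n$, which is $1$-dimensional; if $x$ lies on the relative boundary (an $(n-2)$-sphere inside $e_n^\perp$), then $N(\DD,x)$ is the $2$-dimensional half-plane spanned by $e_n$ and the outward radial direction of $\DD$ at $x$. In either case $\dim N(\DD,x) \leq 2$, so no three linearly independent vectors can lie in a common normal cone. Consequently every $u \in \S^{n-1}$ is automatically a $1$-extremal normal direction of $\DD$, since the hypothetical decomposition $u = u_1+u_2+u_3$ would force three linearly independent normals at a single boundary point.

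Next, I would apply \cref{thm:Minkowskis_quadr_eq_cases} with this $C = \DD$. The theorem's hypothesis $C - C \subseteq e_n^\perp$ is satisfied, and the assumption $V(K,L,\DD^{[n-2]}) > 0$ is carried over. By the previous paragraph the conclusion of \cref{thm:Minkowskis_quadr_eq_cases} then asserts that equality in Minkowski's quadratic inequality is equivalent to $K + F(\tilde{L}, e_n)$ and $\tilde{L} + F(K, e_n)$ sharing a supporting hyperplane in \emph{every} direction $u \in \S^{n-1}$.

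Finally, I would note that agreement of supporting hyperplanes in every direction $u$ means $h_{K + F(\tilde{L}, e_n)}(u) = h_{\tilde{L} + F(K, e_n)}(u)$ for all $u \in \S^{n-1}$, which by the unique determination of a convex body by its support function is equivalent to the stated identity $K + F(\tilde{L}, e_n) = \tilde{L} + F(K, e_n)$. No real obstacle arises; the only subtle point is the normal-cone analysis in the first step, and this is essentially immediate from the explicit geometry of $\DD$.
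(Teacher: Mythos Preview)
Your proposal is correct and follows exactly the approach the paper takes: the paper also observes that the normal cone $N(\DD,x)$ is at most two-dimensional at every point, deduces that every direction is $1$-extremal for $\DD$, and then specializes \cref{thm:Minkowskis_quadr_eq_cases}. Your additional remark that agreement of supporting hyperplanes in all directions forces equality of the bodies is the obvious final step the paper leaves implicit.
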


	\subsection{Disintegration of measures}\label{sec:disint}

	Next, we want to show that under the given assumptions on the pushforward of the measure $\mu$ in \cref{mthm:Christoffel_disk_gnrl}, there exists a disintegration \cref{eq:disintIntro}. To this end, we recall the disintegration theorem from measure theory (see, e.g., \cite{Ambrosio2008}*{Theorem~5.3.1}).
	
	\begin{thm}[{\cite{Ambrosio2008}}]\label{thm:disinitegration}
		Let $X$, $Y$ be two Polish spaces, $\pi:X\to Y$ be a Borel map, $\mu$ be a non-negative Borel measure on $X$, and denote $\nu=\pi_\ast \mu$. Then there exists a $\nu$-a.e.\ uniquely determined family $(\mu_y)_{y\in Y}$ of probability measures on $X$ such that
		\begin{enumerate}[label=\upshape(\roman*)]
			\item for every Borel set $\beta\subseteq X$, the map $y\mapsto \mu_y(\beta)$ is Borel,
			\item for $\nu$-a.e.\ $y\in Y$, the probability measure $\mu_y$ is concentrated on $\pi^{-1}(y)$, and
			\item for every bounded Borel function $f:X\to \R$,
			\begin{equation*}
				\int_{X} f(x)\, \mu(dx)
				= \int_Y \int_{\pi^{-1}(y)} f(x)\, \mu_y(dx)\, \nu(dy).
			\end{equation*}
		\end{enumerate}
	\end{thm}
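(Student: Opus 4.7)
The plan is to derive the statement from the existence of regular conditional probabilities on standard Borel spaces. Since $X$ and $Y$ are Polish they are standard Borel, and after normalizing we may assume $\mu$ is a probability measure; the general finite case follows by scaling, and the $\sigma$-finite case by localization on a countable partition of $Y$.

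First, I would fix a countable algebra $\mathcal{A}$ generating the Borel $\sigma$-algebra $\mathcal{B}(X)$. For each $A \in \mathcal{A}$, the set function $\beta \mapsto \mu(A \cap \pi^{-1}(\beta))$ is a finite Borel measure on $Y$ absolutely continuous with respect to $\nu = \pi_\ast \mu$, so the Radon--Nikodym theorem yields a Borel density $f_A \colon Y \to [0,1]$. Since $\mathcal{A}$ is countable, a single $\nu$-null set $N \subseteq Y$ suffices to ensure that, for every $y \in Y \setminus N$, the assignment $A \mapsto f_A(y)$ is finitely additive on $\mathcal{A}$ with $f_X(y) = 1$.

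The second step, where the Polish hypothesis enters decisively, is to extend each pre-measure on $\mathcal{A}$ to a Borel probability measure $\mu_y$ on $X$. Inner regularity by compact sets, available on Polish spaces, provides the countable additivity needed to invoke Carath\'eodory's extension theorem, and for $y \in N$ one sets $\mu_y$ to be an arbitrary fixed probability measure. Property \textup{(i)} then follows by a monotone class argument from measurability on $\mathcal{A}$, and property \textup{(iii)} likewise follows from the defining identity on $\mathcal{A}$ by monotone convergence together with linearity for simple functions.

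The main obstacle is property \textup{(ii)}, that $\mu_y$ is concentrated on the fiber $\pi^{-1}(y)$ for $\nu$-a.e.\ $y$. I would fix a countable family $\{U_n\}_{n \in \N}$ of open sets in $Y$ separating points and observe that $\mu_y(\pi^{-1}(U_n)) = \indf_{U_n}(y)$ holds for $\nu$-a.e.\ $y$ by the defining property of $f_{\pi^{-1}(U_n)}$. Excluding the countable union of the corresponding null sets, we conclude that $\mu_y$ is supported on $\pi^{-1}(\{y\})$. Uniqueness up to a $\nu$-null set is then immediate, since any two disintegrations must agree on $\mathcal{A}$ for $\nu$-a.e.\ $y$, and hence on all Borel sets by a final monotone class argument.
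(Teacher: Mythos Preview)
The paper does not prove this statement; it is quoted verbatim from \cite{Ambrosio2008}*{Theorem~5.3.1} and used as a black box to derive \cref{cor:disintegration_variant}. There is therefore nothing in the paper to compare your argument against.

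That said, your outline is the standard route to the disintegration theorem via regular conditional probabilities, and it is essentially correct. One point deserves more care: the passage from the finitely additive set function $A \mapsto f_A(y)$ on the countable algebra $\mathcal{A}$ to a countably additive Borel measure is the heart of the matter, and ``inner regularity by compact sets'' is not quite how it is usually justified. The cleanest way is to use that a Polish space is Borel isomorphic to a Borel subset of $[0,1]$, take $\mathcal{A}$ to be the algebra generated by dyadic intervals, and observe that on this algebra every finitely additive probability with $f_{[0,1]}(y)=1$ is automatically $\sigma$-additive (since a decreasing sequence of nonempty closed dyadic cells has nonempty intersection by compactness). Your inner-regularity phrasing can be made to work, but it requires an extra argument to pass from inner regularity of $\mu$ to $\sigma$-additivity of $f_{\,\cdot\,}(y)$ for $\nu$-a.e.\ $y$, which is not entirely immediate.
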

	
	This family $(\mu_y)_{y\in Y}$ is called \emph{disintegration} of $\mu$.	
	We apply this theorem in the instance where $X=\S^{n-1}\setminus \{\pm e_n\}$, $Y=\Gr_2(\R^n,e_n)$, and $\pi:\S^{n-1}\setminus \{\pm e_n\}\to \Gr_2(\R^n,e_n):u\mapsto e_n\vee u$ assigns to the point $u$ the unique $2$-plane containing both $e_n$ and $u$. Moreover, we endow the Grassmann manifold $\Gr_2(\R^n,e_n)$ with the unique $\OO(n-1)$ invariant probability measure.
	
	\begin{cor}\label{cor:disintegration_variant}
		Let $\mu$ be a non-negative Borel measure on $\S^{n-1}\setminus\{\pm e_n\}$ and suppose that $\pi_\ast \mu$ is absolutely continuous with a continuous density. Then there exists an a.e.\ uniquely defined family $(\mu_E)_{E \in \Gr_2(\R^n, e_n)}$ of non-negative Borel measures on $\S^{n-1}$ such that
		\begin{enumerate}[label=\upshape(\roman*)]
			\item for every Borel set $\beta\subseteq \S^{n-1}$, the map $E\mapsto \mu_E(\beta)$ is Borel,
			\item for a.e.\ $E\in\Gr_2(\R^n,e_n)$, the measure $\mu_E$ is concentrated on $\S^1(E)\setminus\{\pm e_n\}$, and
			\item for every bounded Borel function $f:\S^{n-1}\to \R$,
			\begin{align}\label{eq:disintMeasGr2}
				\int_{\S^{n-1}\setminus \{\pm e_n\}} f(u) \, \mu(du) = \int_{\Gr_2(\R^n, e_n)} \int_{\S^1(E)\setminus\{\pm e_n\}} f(v) \, \mu_E(dv)\, dE.
			\end{align}
		\end{enumerate}
	\end{cor}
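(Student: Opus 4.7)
The plan is to invoke the general disintegration theorem (Theorem~2.4) applied to the Borel map $\pi$ and then renormalize the resulting family of probability measures against the density of $\pi_\ast\mu$ with respect to the $\OO(n-1)$-invariant Haar probability measure $dE$ on $\Gr_2(\R^n,e_n)$. This renormalization is what converts the $\pi_\ast\mu$-averaged statement provided by Theorem~2.4 into the Haar-averaged statement \cref{eq:disintMeasGr2} that is required.

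Concretely, first I would apply Theorem~2.4 with $X=\S^{n-1}\setminus\{\pm e_n\}$ and $Y=\Gr_2(\R^n,e_n)$, obtaining a family $(\tilde\mu_E)_{E\in\Gr_2(\R^n,e_n)}$ of probability measures, each concentrated on $\pi^{-1}(E)=\S^1(E)\setminus\{\pm e_n\}$, with $E\mapsto\tilde\mu_E(\beta)$ Borel for every Borel $\beta\subseteq\S^{n-1}$, and satisfying
\begin{equation*}
  \int_{\S^{n-1}\setminus\{\pm e_n\}} f(u)\,\mu(du)
  =\int_{\Gr_2(\R^n,e_n)}\int_{\S^1(E)\setminus\{\pm e_n\}} f(v)\,\tilde\mu_E(dv)\,\nu(dE),
\end{equation*}
with $\nu=\pi_\ast\mu$. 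By hypothesis, $\nu$ admits a continuous density $\rho$ with respect to $dE$. I would then simply define $\mu_E:=\rho(E)\,\tilde\mu_E$ for every $E\in\Gr_2(\R^n,e_n)$.

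Verifying the three properties is then routine. Property~(ii) is inherited from $\tilde\mu_E$. Property~(i) follows because $\rho$ is continuous, hence Borel, and the product of Borel functions is Borel. For property~(iii), substituting $\mu_E=\rho(E)\tilde\mu_E$ into the right-hand side of \cref{eq:disintMeasGr2} and then absorbing $\rho(E)\,dE$ back into $\nu(dE)$ recovers the integral formula from Theorem~2.4.

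The only subtle point, and what I would call the main (mild) obstacle, is the a.e.\ uniqueness claim in the presence of a possibly nontrivial zero set $\{\rho=0\}$. On $\{\rho>0\}$, if $(\mu_E')$ is any competing family, then $\mu_E'/\rho(E)$ is a family of probability measures (after checking total mass via \cref{eq:disintMeasGr2} applied to indicator functions $f=\indf_{\pi^{-1}(\beta)}$) that disintegrates $\mu$ over $\nu$ in the sense of Theorem~2.4, so by the uniqueness part of that theorem it agrees with $\tilde\mu_E$ for $\nu$-a.e.\ $E$, hence $\mu_E=\mu_E'$ for $\nu$-a.e.\ $E$, equivalently for a.e.\ $E$ in $\{\rho>0\}$. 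On $\{\rho=0\}$ the identity \cref{eq:disintMeasGr2} forces $\mu_E'(\S^{n-1})=0$ for a.e.\ such $E$, matching $\mu_E=0$, which gives the claimed a.e.\ uniqueness.
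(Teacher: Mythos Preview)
Your argument is correct and follows exactly the paper's approach: apply the abstract disintegration theorem to $\pi$, then rescale the resulting probability measures by the continuous density $\rho$ of $\pi_\ast\mu$ to pass from integration against $\nu$ to integration against the Haar measure $dE$. You in fact supply more detail than the paper does, in particular your treatment of a.e.\ uniqueness on the set $\{\rho=0\}$, which the paper leaves implicit.
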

	\begin{proof}		
		First, we let $\nu=\pi_\ast \mu$ and denote by $(\tilde{\mu}_E)_{E\in\Gr_2(\R^n,e_n)}$ the disintegration of $\mu$ according to \cref{thm:disinitegration}.		
		By our assumption on $\mu$, we have $\nu(dE)=\rho(E)dE$ with some density $\rho\in C(\Gr_2(\R^n,e_n))$. Due to the continuity of $\rho$, we may now define $\mu_E := \rho(E) \tilde{\mu}_E$ for $E\in\Gr_2(\R^n,e_n)$.
		It is then easy to see that the family $(\mu_E)$ fulfills the assertion.
	\end{proof}

	\subsection{Proof of \cref{mthm:Christoffel_disk_gnrl}}	
    
	Now we are ready to prove the main result of this article, \cref{mthm:Christoffel_disk_gnrl}. In the proof below, we readily identify $\mu$ with its restriction to $\S^{n-1}\setminus\{\pm e_n\}$, and denote by $(\mu_E)$ its disintegration according to \cref{cor:disintegration_variant}.

	\begin{proof}[Proof of \cref{mthm:Christoffel_disk_gnrl}]
		To show that the conditions \cref{it:ChristDiskAbsCont} to \cref{it:ChristDiskIntCond} are necessary, suppose that there exists a convex body $K\in\K(\R^n)$ such that $\mu=S_1(K,\DD,{}\cdot{})$. We identify $\mu$ with its restriction to $\S^{n-1}\setminus\{\pm e_n\}$,		
		take some function $\tilde{f}\in C(\Gr_2(\R^n,e_n))$ and let $f:=\tilde{f}\circ \pi$, that is, $f(u)=\tilde{f}(e_n\vee u)$. Then the Cauchy--Kubota type formula \cref{eq:zonal_Kubota_polarized_i=1}, combined with \cref{eq:MixedVol_Integral}, gives
		\begin{align*}
			&\int_{\S^{n-1}}  f(u)\, S_{1}(K,\DD,du)
			= \frac{\kappa_{n-1}}{2}  \int_{\Gr_{2}(\R^n,e_n)} \tilde{f}(E) S^E_{1}(K|E,\S^1(E))  \, dE \\
			&\qquad = \kappa_{n-1} \int_{\Gr_{2}(\R^n,e_n)} \tilde{f}(E) V_1(K|E) \, dE,
		\end{align*}
		and thus, $\pi_\ast \mu = \kappa_{n-1}V_1(K|E)dE$. Since the expression $V_1(K|E)$ is continuous in $E\in\Gr_2(\R^n,e_n)$, the push-forward measure $\pi_\ast \mu$ is absolutely continuous with a continuous density, showing \cref{it:ChristDiskAbsCont}. Moreover, due to \cref{eq:zonal_Kubota_polarized_i=1} and the uniqueness in \cref{cor:disintegration_variant}, its disintegration $(\mu_E)_E$ is given by $\mu_E=\frac{\kappa_{n-1}}{2}S_1^E(K|E,{}\cdot{})$. Hence, for a.e.\ $E\in\Gr_2(\R^n,e_n)$, the measure $\mu_E$ (as a measure on $\S^1(E)$) is centered, showing \cref{it:ChristDiskCent}.		
		To verify the final condition \cref{it:ChristDiskIntCond}, we let $h$ be the support function of $K$ and combine \cref{eq:Steiner_point} and \cref{thm:Berg_Christoffel} in dimension $2$ to obtain
		\begin{align*}
			&h(u) - \frac{1}{\pi}\int_{\S^1(e_n\vee u)} h(v)\pair{u}{v}\, dv
			= h_{K|(e_n\vee u) - s(K|(e_n\vee u))}(u) \\
			&\qquad = \frac{2}{\pi\kappa_{n-1}} \int_{\S^1(e_n\vee u)}\sqrt{1-\pair{u}{v}^2}(\pi - \arccos\pair{u}{v}) \,\mu_{e_n\vee u}(dv).
		\end{align*}
		
		Conversely, suppose that $\mu$ is a non-negative finite Borel measure on $\S^{n-1}$ satisfying the conditions \cref{it:ChristDiskAbsCont} to \cref{it:ChristDiskIntCond}, with $(\mu_E)_E$ denoting its disintegration according to \cref{cor:disintegration_variant}. Moreover, take $K\in\K(\R^n)$ to be the convex body that has $h$ as its support function. For a.e.\ subspace $E\in \Gr_2(\R^n,e_n)$, the measure $\mu_E$ is centered, so by \cref{thm:Berg_Christoffel}, there exists a convex body $K_E\in\K(E)$ such that $\mu_E=S_1^{E}(K_E,{}\cdot{})$. Combining \cref{eq:Steiner_point} and \cref{eq:Berg_Christoffel}, we obtain that
		\begin{equation*}
			h_{K|E-s(K|E)}(u)
			= \int_{\S^1(E)}\sqrt{1-\pair{u}{v}^2}(\pi - \arccos\pair{u}{v}) \,\mu_{E}(dv)
			= \pi h_{K_E-s(K_E)}(u)
		\end{equation*}
		for every $u\in\S^1(E)$. This shows that $K|E-s(K|E)= \pi (K_E - s(K_E))$ and thus, $\mu_E=S_1^E(K_E,{}\cdot{})=\frac{1}{\pi}S_1^E(K|E,{}\cdot{})$. As $\mu_E$ has no mass at $\pm e_n$, so has $S_i^E(K|E,{{}\cdot{}})$. Therefore, combining \cref{eq:zonal_Kubota_polarized_i=1} and \cref{eq:disintMeasGr2}, we have that for every bounded Borel function $f:\S^{n-1}\to\R$,
		\begin{align*}
			&\int_{\S^{n-1}} f(u) \, \mu(du)
			= \int_{\Gr_2(\R^n, e_n)} \int_{\S^1(E)\setminus\{\pm e_n\}} f(v) \, \mu_E(dv)\, dE \\
			&\qquad =  \frac{1}{\pi}\int_{\Gr_{2}(\R^n,e_n)} \!\int_{\S^{1}(E)\setminus\{\pm e_n\}} \! f(u)\, S^E_{1}(K|E,du) \, dE
			= \frac{2}{\kappa_{n-1}\pi} \int_{\S^{n-1}}  f(u)\, S_{1}(K,\DD,du).
		\end{align*}
		Hence $\mu=\frac{2}{\kappa_{n-1}\pi}S_1(K,\DD,{}\cdot{})$ and rescaling $K$ yields the claim.
		
		For the uniqueness part of the statement, take two convex bodies $K,L\in\K(\R^n)$ such that $S_1(K,\DD,{}\cdot{})=S_1(L,\DD;{}\cdot{})$, and also, $S_1(K,\DD,\{\pm e_n\})=0$. By integrating the respective support function of $K$ and $L$ against these mixed area measures, by \cref{eq:MixedVol_Integral}, we have that $V(K,K,\DD^{[n-2]}) = V(K,L,\DD^{[n-2]})=V(L,L,\DD^{[n-2]})$.
		
		Thus, equality is attained in Minkowski's quadratic inequality \cref{eq:Minkowskis_quadr_ineq} for $C=\DD$, so according to \cref{cor:MinkQuadEqDiskCases}, the bodies $K+F(L,e_n)$ and $L+F(K,e_n)$ are equal. Noting, however, that due to \cref{lem:MixedAreaMeas_locally_det} and identities \cref{eq:MixedVol_IntrinsicVol} and \cref{eq:MixedAreaMeas_hyperplane},
		\begin{equation*}
			S_1(K,\DD;\{\pm e_n\})
			= S_1(F(K,\pm e_n),\DD;\{\pm e_n\})
			= \frac{1}{n} V_1(F(K,\pm e_n)),
		\end{equation*}
		the condition $\mu(\pm e_n) = 0$ is equivalent to the faces $F(K,e_n)$ and $F(K,-e_n)$ being only singletons. Hence, we conclude that $K$ and $L$ are translates of each other.
	\end{proof}
	
	Under the addition that $\mu$ is even, which corresponds to $K$ being centrally symmetric, the classification simplifies considerably. This is the content of the theorem below, which we obtain from an easy modification of the proof of \cref{mthm:Christoffel_disk_gnrl}.
	
	\begin{thm}\label{thm:Christoffel_disk_gnrl_even}
		Let $\mu$ be a non-negative even finite Borel measure on $\S^{n-1}$ and $\mu(\{\pm e_n\})=0$. Then there exists a convex body $K\in\K(\R^n)$ such that $\mu=S_1(K,\DD,{}\cdot{})$ if and only if $\pi_\ast \mu$ is absolutely continuous with a continuous density and
		\begin{equation}\label{eq:Christoffel_disk_gnrl_even}
			h(u)
			= \int_{\S^1(e_n\vee u)}\sqrt{1-\pair{u}{v}^2} \,\mu_{e_n\vee u}(dv),			
		\end{equation}
		defined for a.e.\ $u\in\S^{n-1}$, is a support function.
		
		Moreover, $K$ is unique up to a translation.
	\end{thm}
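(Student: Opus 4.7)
The plan is to reduce \cref{thm:Christoffel_disk_gnrl_even} to \cref{mthm:Christoffel_disk_gnrl}, with the evenness hypothesis eliminating two of the three conditions in the general statement. The preliminary observation is that the map $\pi$ satisfies $\pi(-u) = \pi(u)$, so the antipodal map $\iota$ preserves the fibres of $\pi$. Combined with the essential uniqueness in \cref{cor:disintegration_variant} applied to $\iota_\ast \mu = \mu$, this forces $\iota_\ast \mu_E = \mu_E$, so the disintegrating measure $\mu_E$ is even on $\S^1(E)$ for a.e.\ $E \in \Gr_2(\R^n, e_n)$. In particular, $\int_{\S^1(E)} v\, \mu_E(dv) = 0$, and condition \ref{it:ChristDiskCent} of \cref{mthm:Christoffel_disk_gnrl} holds automatically.

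Next I would simplify condition \ref{it:ChristDiskIntCond} under the evenness assumption. Using $\arccos(-t) = \pi - \arccos t$, the even part of the function $t \mapsto \sqrt{1-t^2}(\pi - \arccos t)$ equals $\tfrac{\pi}{2}\sqrt{1-t^2}$, so pairing against the even measure $\mu_E$ collapses Berg's kernel to $\tfrac{\pi}{2}\sqrt{1-\pair{u}{v}^2}$. Moreover, an even support function $h$ on $\S^{n-1}$ is in particular even in $v$ along any circle $\S^1(e_n \vee u)$, so $\int_{\S^1(e_n\vee u)} h(v) \pair{u}{v}\, dv = 0$ because the integrand is odd. Thus, for even $h$ and even $\mu_E$, condition \ref{it:ChristDiskIntCond} of \cref{mthm:Christoffel_disk_gnrl} is equivalent to \cref{eq:Christoffel_disk_gnrl_even} after absorbing the scalar $\pi/2$ into $h$.

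For necessity, I would first argue that one may assume $K$ centrally symmetric: since $S_1(-K,\DD,{{}\cdot{}}) = \iota_\ast S_1(K,\DD,{{}\cdot{}}) = \mu$ and $S_1({{}\cdot{}},\DD,{{}\cdot{}})$ is additive in its first argument under Minkowski addition, replacing $K$ by its symmetral $\tfrac{1}{2}(K + (-K))$ leaves $\mu$ unchanged while making $h_K$ even. Then \cref{mthm:Christoffel_disk_gnrl} combined with the simplification above yields that $\tfrac{2}{\pi} h_K$ is a support function satisfying \cref{eq:Christoffel_disk_gnrl_even}. Conversely, the right-hand side of \cref{eq:Christoffel_disk_gnrl_even} is manifestly even in $u$ (the circle $\S^1(e_n \vee u)$ and the integrand are invariant under $u \mapsto -u$), so $h$ is even and all three conditions of \cref{mthm:Christoffel_disk_gnrl} hold for $\tfrac{\pi}{2} h$; invoking that theorem and rescaling gives the desired $K$, and uniqueness up to translation is inherited directly. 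The only genuine bookkeeping concern is tracking the factor $\pi/2$ coming from the even part of Berg's kernel $g_2$; there is no new geometric obstacle.
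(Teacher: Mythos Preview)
Your proposal is correct and follows the same reduction-to-\cref{mthm:Christoffel_disk_gnrl} strategy as the paper: evenness of $\mu$ forces evenness of the disintegration $\mu_E$ via the a.e.\ uniqueness in \cref{cor:disintegration_variant}, which makes \ref{it:ChristDiskCent} automatic and collapses \ref{it:ChristDiskIntCond} to \cref{eq:Christoffel_disk_gnrl_even} exactly as you compute. The one place you diverge from the paper is in the necessity direction, where the paper invokes the uniqueness clause of \cref{mthm:Christoffel_disk_gnrl} (hence, ultimately, the Shenfeld--van Handel equality cases in \cref{cor:MinkQuadEqDiskCases}) to conclude that $K$ itself must already be centrally symmetric, whereas you simply pass to the Minkowski symmetral $\tfrac{1}{2}(K+(-K))$ using the linearity of $S(\,\cdot\,,\DD^{[n-2]};\,\cdot\,)$ in its first argument; your route is more elementary and avoids the heavy equality-case machinery, at the cost of not recovering the side information that the original $K$ was symmetric.
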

	\begin{proof}
		Suppose first that there exists a convex body $K\in\K(\R^n)$ such that $\mu=S_1(K,\DD,{}\cdot{})$.
		By the evenness of $\mu$ and compatibility of mixed area measures with linear isometries, $\mu=S_1(-K,\DD,{}\cdot{})$. By the uniqueness statement in \cref{mthm:Christoffel_disk_gnrl}, there exists some $x\in\R^n$ such that $K=x-K$, that is, $K$ is symmetric about $x$.
		
		Due to \cref{mthm:Christoffel_disk_gnrl}, the push-forward measure $\pi_\ast \mu$ is absolutely continuous with a continuous density, and from the proof of \cref{mthm:Christoffel_disk_gnrl}, the disintegration of $\mu$ is necessarily given by $\mu_E=\frac{\kappa_{n-1}}{2}S_1^E(K|E,{}\cdot{})$ for a.e.\ $E\in\Gr_2(\R^n,e_n)$. In particular, $\mu_E$ (as a measure on $\S^1(E)$) is even for a.e.\ $E\in\Gr_2(\R^n,e_n)$.
		Moreover, from the proof of \cref{mthm:Christoffel_disk_gnrl}, the support function $h$ of a dilated copy of $K$ satisfies identity \cref{eq:Christoffel_disk_gnrl}. Due to the evenness of $h$ and $\mu_E$, this identity then simplifies to \cref{eq:Christoffel_disk_gnrl_even}.
		
		Conversely, suppose now that $\mu$ is a non-negative finite Borel measure on $\S^{n-1}$ satisfying the conditions stated in the theorem, with $(\mu_E)_E$ denoting its disintegration according to \cref{cor:disintegration_variant}. The evenness of $\mu$ and a.e.\ uniqueness of its disintegration assert that $\mu_E$ (as a measure on $\S^1(E)$) is for a.e.\ $E\in\Gr_2(\R^n,e_n)$ even, and thus, centered.
		
		Consequently, the right hand side of \cref{eq:Christoffel_disk_gnrl_even} is an even function of $u$, and thus, so is the left hand side. Since identity \cref{eq:Christoffel_disk_gnrl_even} is simply the even component of identity \cref{eq:Christoffel_disk_gnrl}, we deduce that $h$ also satisfies \cref{eq:Christoffel_disk_gnrl}. In conclusion, $\mu$ meets the requirements of \cref{mthm:Christoffel_disk_gnrl}, and thus, there exists a convex body $K\in\K(\R^n)$ such that $\mu=S_1(K,\DD,{}\cdot{})$.	
	\end{proof}

	We turn to the special case where $\mu$ is absolutely continuous with respect to the spherical Lebesgue measure, that is $\mu(du)=q(u)du$ for some $q\in L^1(\S^{n-1})$.
	We have the following decomposition of the spherical Lebesgue measure into spherical cylinder coordinates at our disposal (see, e.g., \cite{Mueller1966}*{p.~1})
	\begin{equation*}
		\int_{\S^{n-1}} q(u)\, du
		= \frac{1}{2} \int_{\S^{n-2}(e_n^\perp)} \int_{\S^1(e_n\vee u)} q(v) \abs{\pair{u}{v}}^{n-2}\, dv\, du.
	\end{equation*}	
	
	From this formula, it is immediate that $\pi_\ast \mu$ is absolutely continuous with a continuous density given by $\frac{d\pi_\ast \mu}{dE}
	= \frac{\omega_{n-1}}{2} \int_{\S^1(E)} q(v) \norm{v|E}^{n-2}\, dv$. Moreover, the disintegration of $\mu$ in the sense of \cref{eq:disintMeasGr2} is given by
	\begin{equation*}
		\mu_E(dv)
		= \frac{\omega_{n-1}}{2} q(v) (1-\pair{e_n}{v}^2)^{\frac{n-2}{2}}\, dv.
	\end{equation*}
	Hence, as a special case of \cref{mthm:Christoffel_disk_gnrl}, we obtain the following.

	\begin{cor}\label{cor:unique}
		Let $q$ be a non-negative $L^1(\S^{n-1}) $ function. Then there exists a convex body $K\in\K(\R^n)$ such that $q(u)du=S_1(K,\DD,du)$ if and only if $\int_{\S^1(e_n\vee u)}q(v)\abs{\pair{u}{v}}^{n-2}v \, dv=0$ for almost every $u\in \S^{n-2}(e_n^\perp)$, and there exists a support function $h$ such that
		\begin{align*}
			&h(u) - \frac{1}{\pi}\int_{\S^1(e_n\vee u)} h(v)\pair{u}{v}\, dv \\
			&\qquad = \int_{\S^1(e_n\vee u)}\sqrt{1-\pair{u}{v}^2}(\pi - \arccos\pair{u}{v})(1-\pair{e_n}{v}^2)^{\frac{n-2}{2}} q(v)\, dv
		\end{align*}
		for a.e.\ $u\in\S^{n-1}$.
		
		Moreover, $K$ is unique up to a translation.
	\end{cor}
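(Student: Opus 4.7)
The plan is to derive this corollary as a direct specialization of \cref{mthm:Christoffel_disk_gnrl} to the measure $\mu(du) = q(u)\, du$. All the measure-theoretic work has already been carried out in the paragraph immediately preceding the corollary, where the cylinder-coordinate decomposition of spherical Lebesgue measure is used to identify both the pushforward $\pi_\ast \mu$ and the disintegration $(\mu_E)_{E}$ explicitly in terms of $q$. There is essentially nothing new to prove; only a translation of conditions.

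First, I would invoke the cylinder-coordinate formula to read off that $\pi_\ast \mu = \rho(E)\, dE$ with $\rho(E) = \tfrac{\omega_{n-1}}{2}\int_{\S^1(E)} q(v)\abs{\pair{u_E}{v}}^{n-2}\, dv$ (where $u_E\in E \cap e_n^\perp$ is a unit vector, so that $\abs{\pair{u_E}{v}}$ is precisely the factor written as $\norm{v|E}$ in the paragraph above), and that the a.e.\ uniquely determined disintegration provided by \cref{cor:disintegration_variant} is
\begin{equation*}
    \mu_E(dv) = \tfrac{\omega_{n-1}}{2}\, q(v)\, \abs{\pair{u_E}{v}}^{n-2}\, dv
\end{equation*}
on $\S^1(E)$. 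This verifies condition \cref{it:ChristDiskAbsCont} of \cref{mthm:Christoffel_disk_gnrl}.

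Next, I would substitute this explicit formula into the other two conditions of \cref{mthm:Christoffel_disk_gnrl}. The centering condition \cref{it:ChristDiskCent}, namely $\int_{\S^1(E)} v\, \mu_E(dv) = 0$ for a.e.\ $E\in \Gr_2(\R^n,e_n)$, becomes exactly the first stated condition of the corollary after the natural $2$-to-$1$ parametrization of $\Gr_2(\R^n,e_n)$ by $\S^{n-2}(e_n^\perp)$. Likewise, the integral equation \cref{eq:Christoffel_disk_gnrl} of condition \cref{it:ChristDiskIntCond} becomes the displayed integral equation of the corollary once the density factor $\tfrac{\omega_{n-1}}{2}\abs{\pair{u}{v}}^{n-2} q(v)\, dv$ is substituted for $\mu_{e_n \vee u}(dv)$ (absorbing the harmless constant $\tfrac{\omega_{n-1}}{2}$ into a rescaling of $K$, exactly as done at the end of the proof of \cref{mthm:Christoffel_disk_gnrl}). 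The uniqueness clause transfers verbatim.

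The only genuine obstacle is the continuity of the density $\rho$ of $\pi_\ast \mu$, which is not automatic for $q \in L^1(\S^{n-1})$: the formula for $\rho(E)$ resembles a generalized Radon-type transform, and pointwise continuity in $E$ requires some regularity beyond integrability. In the write-up I would either note that for a.e.\ applications $q$ is taken sufficiently regular so that $\rho$ is continuous, or observe that the proof of \cref{mthm:Christoffel_disk_gnrl} itself only uses that the disintegration \cref{eq:disintMeasGr2} holds (which is ensured here by Fubini directly from the cylinder-coordinate formula), so the continuity requirement in condition \cref{it:ChristDiskAbsCont} can be relaxed without affecting the argument.
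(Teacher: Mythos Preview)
Your proposal is correct and follows exactly the paper's approach: the corollary is stated immediately after the paragraph computing $\pi_\ast\mu$ and the disintegration $(\mu_E)_E$ from the cylinder-coordinate formula, and the paper simply says ``Hence, as a special case of \cref{mthm:Christoffel_disk_gnrl}, we obtain the following.'' Your observation about the continuity of the density of $\pi_\ast\mu$ for general $q\in L^1$ is a genuine subtlety that the paper does not address---it asserts continuity as ``immediate''---so your suggested workaround (noting that only the disintegration identity itself is actually used) is a reasonable patch.
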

	
	Under the additional assumption of evenness, \cref{thm:Christoffel_disk_gnrl_even} yields the following.
	
	\begin{cor}
		Let $q$ be a non-negative even $L^1(\S^{n-1}) $ function. Then there exists a convex body $K\in\K(\R^n)$ such that $q(u)du=S_1(K,\DD,du)$ if and only if
		\begin{equation*}
			h(u)
			= \int_{\S^1(e_n\vee u)}\sqrt{1-\pair{u}{v}^2} (1-\pair{e_n}{v}^2)^{\frac{n-2}{2}} q(v)\, dv,
		\end{equation*}
		defined for a.e.\ $u\in\S^{n-1}$, is a support function.
		
		Moreover, $K$ is unique up to a translation.
	\end{cor}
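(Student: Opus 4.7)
The plan is to specialize \cref{thm:Christoffel_disk_gnrl_even} to the absolutely continuous case $\mu(du) = q(u)\,du$, using the explicit disintegration formula furnished by the spherical cylinder decomposition that the paper records just above the corollary.

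First, I would read off the disintegration directly. The spherical cylinder decomposition recalled before the corollary identifies $\pi_\ast\mu$ as absolutely continuous on $\Gr_2(\R^n,e_n)$ with continuous density, and pins down the fiber measures
\begin{equation*}
\mu_E(dv) = \frac{\omega_{n-1}}{2}\, q(v)\, \norm{v|E}^{n-2}\, dv.
\end{equation*}
Because $q$ is assumed even and the weight $\norm{v|E}^{n-2}$ is even in $v$, each fiber measure $\mu_E$ is an even (hence centered) measure on $\S^1(E)$, so the regularity hypothesis of \cref{thm:Christoffel_disk_gnrl_even} is satisfied automatically.

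Second, I would invoke \cref{thm:Christoffel_disk_gnrl_even}: the existence of $K\in\K(\R^n)$ with $q(u)\,du = S_1(K,\DD,du)$ is equivalent to the function $h$ defined by \cref{eq:Christoffel_disk_gnrl_even} being a support function. Substituting the above explicit expression for $\mu_{e_n\vee u}$ into that integral identity, and absorbing the overall constant that arises by rescaling $K$, produces the integral formula displayed in the corollary. Uniqueness of $K$ up to translation is then inherited directly from the uniqueness statement in \cref{thm:Christoffel_disk_gnrl_even}.

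I expect no serious obstacle here: the corollary is essentially a bookkeeping exercise, and the only point requiring mild care is to track the normalization constant $\omega_{n-1}/2$ coming from the spherical cylinder decomposition and verify that it can be absorbed into the scaling of $K$ so that the identity holds with the exact coefficients displayed in the statement.
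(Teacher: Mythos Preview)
Your proposal is correct and follows the same route as the paper: the corollary is obtained by specializing \cref{thm:Christoffel_disk_gnrl_even} to an absolutely continuous $\mu$, using the explicit fiber measures $\mu_E(dv)=\tfrac{\omega_{n-1}}{2}q(v)\norm{v|E}^{n-2}dv$ supplied by the spherical cylinder decomposition, and absorbing the resulting constant into a rescaling of $K$. The paper does not spell out more than this, so your write-up matches its intended argument.
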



	\subsection{Discussion}\label{sec:discussion1} 
	
	
	In \cite{Brauner2025a}, the authors considered the mixed Christoffel problem for the disk area measure in the greater context of the Christoffel--Minkowski problem, but restricting the attention to the setting where all bodies are bodies of revolution with respect to a fixed axis (say $e_n$). In this setting, the following weaker version of \cref{mthm:Christoffel_disk_gnrl} was proved. Here, a measure (or a function) on $\S^{n-1}$ is called zonal, if it is invariant under rotations from $\SO(n-1)$, the stabilizer of $e_n$ inside $\SO(n)$.
	
	\begin{thm}[\cite{Brauner2025a}*{Thm.~B}]\label{thm:ZonalChristDisk}
		Let $\mu$ be a non-negative, zonal Borel measure on $\S^{n-1}$. Then there exists a body of revolution $K \in \K(\R^n)$ that is not a segment with $\mu = S_i(K, \DD; {{}\cdot{}})$ if and only if $\mu$ is centered and not concentrated on $\S^{n-1}\cap e_n^\perp$.
	\end{thm}
	
	It is remarkable that under additional symmetries, condition~\ref{it:ChristDiskIntCond} of \cref{mthm:Christoffel_disk_gnrl}, that is, the convexity of the function in \cref{eq:Christoffel_disk_gnrl}, disappears. We will give here a short argument, why this is the case.
	
	\begin{lem}\label{lem:condZonCase}
		Suppose that $\mu$ is a non-negative, zonal, centered Borel measure on $\S^{n-1}$ with $\mu(\{\pm e_n\}) = 0$, which is not concentrated on $\S^{n-2}(e_n^\perp)$. Then there exists a support function $h$ of a convex body such that
		\begin{equation}\label{eq:lemCondZonCase}
			h(u)- \frac{1}{\pi}\int_{\S^1(e_n\vee u)} h(v)\pair{u}{v}\, dv = \int_{\S^1(e_n\vee u)}\sqrt{1-\pair{u}{v}^2}(\pi - \arccos\pair{u}{v}) \,\mu_{e_n\vee u}(dv),			
		\end{equation}
		for a.e. $u \in \S^{n-1}$, that is, condition~\ref{it:ChristDiskIntCond} of \cref{mthm:Christoffel_disk_gnrl} is satisfied.
	\end{lem}
	\begin{proof}
		First note that if $\mu$ is zonal, then $\pi_\ast\mu$ is a constant multiple of the Haar measure on $\Gr_{2}(\R^n, e_n)$, in particular, absolutely continuous. Consequently, by \cref{cor:disintegration_variant}, the disintegration into measures $\mu_{e_n\vee u}$ exists and the integral in \cref{eq:lemCondZonCase} is well-defined. Moreover, the invariance implies that $\mu_{e_n\vee \tau u} = \tau_\ast \mu_{e_n\vee u}$ for all $\tau \in \SO(n-1)$. Hence, the integral in \cref{eq:lemCondZonCase} defines a zonal function on $\S^{n-1}$.
		
		Next, note that a zonal function on $\S^{n-1}$ is a support function, if and only if it is a support function on any subsphere $\S^1(e_n \vee u)$, $u\in \S^{n-2}(e_n^\perp)$. Consequently, we need to show that
		\begin{align*}
			w \mapsto \int_{\S^1(e_n\vee u)}\sqrt{1-\pair{w}{v}^2}(\pi - \arccos\pair{w}{v}) \,\mu_{e_n\vee u}(dv), \quad w \in \S^1(e_n \vee u),
		\end{align*}
		defines a support function for some fixed $u \in \S^{n-2}(e_n^\perp)$. By \cref{thm:Berg_Christoffel}, this is the case if and only if there exists a convex body $K \in \K(e_n \vee u)$ such that $\mu_{e_n\vee u} = S_1^{e_n \vee u}(K, {{}\cdot{}})$, that is, $\mu_{e_n\vee u}$ satisfies the condition of the classical Minkowski problem in $e_n \vee u$. This follows, however, from the fact that with $\mu$ also $\mu_{e_n\vee u}$ is centered and not concentrated on $\S^{n-2}(e_n^\perp) \cap \S^1(e_n \vee u)$.
	\end{proof}
	
	
	Motivated by \cref{thm:ZonalChristDisk} and \cref{lem:condZonCase}, it is a natural question to ask how strong condition~\ref{it:ChristDiskIntCond} of \cref{mthm:Christoffel_disk_gnrl} actually is. To illustrate this, we give an example of a measure which does not satisfy condition~\ref{it:ChristDiskIntCond}.
	
	\begin{exl}\label{ex:MeasNotDiskAreaMeas}
		We will define the measure by its disintegration as in \cref{eq:disintMeasGr2}. To this end, fix $u \in \S^{n-2}(e_n^\perp)$, let $0 < \varepsilon < 1$ and define
		\begin{align*}
			\Omega = \{E \in \Gr_{2}(\R^n, e_n): \, E \cap \mathrm{Cap}(u,1-\varepsilon) \neq  \emptyset \},
		\end{align*}
		where $\mathrm{Cap}(u,t) = \{ w\in\S^{n-1} : \pair{u}{w} \geq t \}$, $t \in [0,1]$. Then we set
		\begin{align*}
			\mu(\omega) = \int_{\Omega} \int_{\S^1(E)\setminus\{\pm e_n\}} \indf_\omega(v) \rho(E)\, dv\, dE,
		\end{align*}
		for all Borel sets $\omega \subset \S^{n-1}$, where $\rho \in C(\Gr_{2}(\R^n, e_n))$ is positive on $\mathrm{int}(\Omega)$ and zero on the complement of $\Omega$. Hence, $\mu_{E} = \rho(E) dv$ for $E \in \Omega$ and zero otherwise, and, clearly, $\mu$ satisfies conditions~\ref{it:ChristDiskAbsCont} and \ref{it:ChristDiskCent} of \cref{mthm:Christoffel_disk_gnrl}.
		
		Next, note that, since the integrand is positive almost everywhere,
		\begin{align}\label{eq:exNotSatPos}
			\int_{\S^1(e_n\vee u)}\sqrt{1-\pair{u}{v}^2}(\pi - \arccos\pair{u}{v}) \,\mu_{e_n\vee u}(dv) > 0,
		\end{align}
		but we have
		\begin{align}\label{eq:exNotSatZero}
			\int_{\S^1(e_n\vee w)}\sqrt{1-\pair{w}{v}^2}(\pi - \arccos\pair{w}{v}) \,\mu_{e_n\vee w}(dv) = 0
		\end{align}
		for all $w \in \S^{n-2}(e_n^\perp)$ with $|\pair{w}{u}| < 1-\varepsilon$. We conclude that \ref{it:ChristDiskIntCond} cannot be satisfied. Indeed, if \ref{it:ChristDiskIntCond} holds, that is, \cref{eq:Christoffel_disk_gnrl} defines a support function of a body $K \in \K(\R^n)$, then \cref{eq:exNotSatZero} implies that $K \subset \mathrm{span}\{e_n\}$. This contradicts \cref{eq:exNotSatPos}.
	\end{exl}
	
	Let us point out that Example~\ref{ex:MeasNotDiskAreaMeas} can easily be generalized to yield a full range of examples.


    \section{A convexity criterion}\label{sec:convCritC2p}

    In the following, we interpret the mixed Christoffel problem involving the disk as a differential equation, imposing stronger regularity on the objective body~$K$. Employing the classical characterization of convexity in terms of definiteness of the Hessian, we find a criterion to determine when the solution of said differential equation is indeed convex.
    
    The first step is to employ polar coordinates on the unit sphere in order to relate the problem to a family of ordinary differential equations. We will employ some classical tools from Riemannian geometry; as a general reference we recommend the monograph by O'Neill~\cite{ONeill1983}.

    \subsection{Polar coordinates on the unit sphere}

    In order to compute the spherical Hessian in polar coordinates, it will be convenient to regard the unit sphere $\mathbb{S}^{n-1}$, endowed with its standard round metric $g_{\mathbb{S}^{n-1}}$, as a warped product $\S^{n-1} = [0,\pi]\times_{\sin\theta}\S^{n-2}$, that is, 
    $$g_{\S^{n-1}} = d\theta^2 + \sin^2\theta \,g_{\S^{n-2}},
    $$
    where $g_{\S^{n-2}}$ denotes the standard round metric on $\S^{n-2}$ (see, e.g., \cite{ONeill1983}*{p.~204}).
    Given this representation of the metric tensor, the connection formulas are given as follows: Let $\partial_\theta$ denote the base direction and $X,Y$ be tangent to $\S^{n-2}$. Then
    \begin{align*}
    	\nabla_{\partial_\theta}\partial_\theta &= 0,\\
    	\nabla_X \partial_\theta &= \nabla_{\partial_\theta} X = \cot\theta \,X,\\
    	\nabla_X Y &= \nabla^{\S^{n-2}}_X Y - \sin\theta\cos\theta\, g_{\S^{n-2}}(X,Y)\,\partial_\theta,
    \end{align*}      
    where $\nabla^{\S^{n-2}}$ denotes the Levi--Civita connection on the $(n-2)$-dimensional sphere $\S^{n-2}$. Next, we recall that the Hessian of a function $h\in C^2(\S^{n-1})$ is defined as the covariant $2$-tensor field
    $
    \nabla^2_{\S^{n-1}}f(X,Y)
    = X Y h - \nabla_X Y h
    $
    for all smooth vector fields $X$, $Y$ on $\S^{n-1}$. Thus, the components of the Hessian on $\S^{n-1}$ are given by 
    \begin{align*}
    	\nabla_{\S^{n-1}}^2 h(\partial_\theta,\partial_\theta)
    	&=  \partial_\theta^2 h,\\
    	\nabla_{\S^{n-1}}^2 h(\partial_\theta,X)
    	&= X  \partial_\theta h - \cot\theta\,X h,\\
    	\nabla_{\S^{n-1}}^2 h(X,Y)
    	&= \nabla_{\S^{n-2}}^2 h(X,Y)
    	+ \sin\theta\cos\theta\,  \partial_\theta h\, g_{\S^{n-2}}(X,Y).
    \end{align*}
    Denoting $e^\theta = d\theta$, we can thus express the spherical Hessian as follows
    \begin{align*}
    	\nabla^2_{\S^{n-1}} h
        = \partial_\theta^2 h \,e^\theta \otimes e^\theta + \big( \nabla^{\S^{n-2}} (\partial_\theta h) - \cot\theta\, \nabla^{\S^{n-2}} h \big) \odot e^\theta 
    	    + \nabla^2_{\S^{n-2}} h + \,\sin\theta\cos\theta \, \partial_\theta h \, g_{\S^{n-2}},
    \end{align*} 
    where $\odot$ denotes the symmetric product.
    
    Next, we denote by $D^2h$ the Hessian of the one-homogeneous extension of $h$ to $\R^n\setminus\{o\}$. It is related to the spherical Hessian via the identity $D^2 h =\nabla_{\S^{n-1}}^2 h + h g_{\S^{n-1}}$ (see, e.g., \cite{Schneider2014}*{Sec.~2.5}). Hence, its representation in polar coordinates on $\S^{n-1}$ is given by
    \begin{align}\label{D2polar}
    \begin{split}
 		D^2h = (\partial_\theta^2 h + h)\,e^\theta \otimes e^\theta + \big( \nabla^{\S^{n-2}} (\partial_\theta h )- \cot\theta\, \nabla^{\S^{n-2}} h \big) \odot e^\theta\\
         + \nabla^2_{\S^{n-2}} h + \,(\sin\theta\cos\theta \, \partial_\theta h + \sin^2\theta h)\, g_{\S^{n-2}}.
         \end{split}
    \end{align}
    For the subsequent computations, it is convenient to work with an orthonormal frame with respect to the metric tensor on the sphere $\S^{n-1}$. So for a given orthonormal frame $\{\hat{e}_1,\hat{e}_2,\dots,\hat{e}_{n-2}\}$ in $\S^{n-2}$, we set
    $$
    e_\theta = \partial_\theta \qquad\text{and}\qquad e_{i} = \frac{1}{\sin\theta}\hat{e}_{i},
    $$
    so that $\{e_\theta,e_1,\dots,e_{n-2}\}$ is an orthonormal frame in $\S^{n-1}$. Thus, the matrix representation of $D^2h$ in this frame $\{e_\theta,e_1,\dots,e_{n-2}\}$ is given by 
    \begin{equation}\label{eq:Hessian_polar_coords}
        \begin{bmatrix}
    		\partial_\theta^2 h + h
    		&
    		\dfrac{1}{\sin\theta}\,\nabla^{\S^{n-2}}(\partial_\theta h)
    		- \dfrac{\cot\theta}{\sin\theta}\,\nabla^{\S^{n-2}}h
    		\\[0.5em]
    		\left(
    		\dfrac{1}{\sin\theta}\,\nabla^{\S^{n-2}}(\partial_\theta h)
    		- \dfrac{\cot\theta}{\sin\theta}\,\nabla^{\S^{n-2}}h
    		\right)^{\!\top}
    		&
    		\dfrac{1}{\sin^2\!\theta}\,\nabla_{\S^{n-2}}^2h
    		+ (\cot\theta\,\partial_\theta h + h)\,I_{n-2}
    	\end{bmatrix}.
    \end{equation}

    Let $U \subseteq \S^{n-1}$ be open and $f\in C(\S^{n-1})\cap C^2(U)$. By abuse of notation, we denote by $D^2f(u)$ the Hessian of the one-homogeneous extension of $f$ at a point $u\in U$ viewed as an endomorphism of the subspace $u^\perp$. Moreover, we denote by $\mathsf{D}$ the symmetric polarization of the determinant, called \emph{mixed discriminant}:
    Given a family of endomorphisms $A_1\ldots,A_{n-1}$ of $(n-1)$-dimensional Euclidean space, the mixed discriminant $\mathsf{D}(A_1, \ldots, A_{n-1})$ is defined implicitly by
    \begin{equation*}
        \det(\lambda_1 A_1 + \cdots \lambda_{n-1}A_{n-1})
        = \sum_{j_1,\ldots,j_{n-1}=1}^{n-1} \lambda_{j_1}\cdots\lambda_{j_{n-1}} \mathsf{D}(A_{j_1},\ldots,A_{j_{n-1}}), \quad \lambda_1,\ldots,\lambda_{n-1}\in\R.
    \end{equation*}
    Given a family of convex bodies $K_1,\ldots,K_{n-1}\in\K(\R^n)$ such that their respective support functions are $C^2$~functions on some open subset $U\subseteq \S^{n-1}$, we have
    \begin{equation*}
        S(K_1,\ldots,K_{n-1};du)
        = \mathsf{D}(D^2h_{K_1}(u),\ldots,D^2h_{K_{n-1}}(u))du
        \qquad\text{on }U.
    \end{equation*} 
    Moreover, by a change of variables, the spherical Lebesgue measure decomposes as follows:
    \begin{equation*}
        du = (\sin\theta)^{n-2}d\theta\, dw.
    \end{equation*}
    In particular, the mixed are measure can be written as
    \begin{equation}\label{eq:mixed_area_meas_polar_coords}
        S(K_1,\dots,K_{n-1},du)
        = \mathsf{D}\big(D^2 h_{K_1}(\theta,w),\dots,D^2 h_{K_{n-1}}(\theta,w)\big)\,
        (\sin\theta)^{n-2}\,d\theta\,dw
        \qquad\text{on }U.
    \end{equation}
    We apply this to express the mixed area measure involving the disk in polar coordinates $(\theta,w)$.
    


    \begin{lem}\label{polar_density}
    Let $K \in \mathcal{K}^n$ be a convex body of class $C^2_+$. The mixed area measure $S_1(K,\mathbb{D},\,\cdot\,)$ has density in polar coordinates given by
    \begin{equation}\label{eq:polar_density}
    S_1(K,\mathbb{D};du)
    = \frac{1}{n-1}\big(\partial_\theta^2 h_K(\theta,w) + h_K(\theta,w)\big) d\theta\, dw.
    \end{equation}
    \end{lem}
\begin{proof}
First note that since $K$ is of class $C^2_+$, the face $F(K,\pm e_n)$ in direction $\pm e_n$ consists of single points $x_\pm$. Hence, $S_1(K,\DD;\{\varepsilon e_n\}) = S_1(F(K,\varepsilon e_n),\DD;\{\varepsilon e_n\}) = S_1(\{x_{\varepsilon}\},\DD;\{\varepsilon e_n\}) = 0$ for $\varepsilon\in \{\pm 1\}$, and we can work on $\S^{n-1}\setminus\{\pm e_n\}$.

Next, again since $K$ is of class $C^2_+$, by \cref{eq:mixed_area_meas_polar_coords}, the measure $S_1(K,\mathbb{D},\,\cdot\,)$ has density with respect to the spherical Lebesgue measure $du$ on $\mathbb{S}^{n-1}\setminus\{e_n\}$ given by
    $$
    \mathsf{D}(D^2 h_K(u), D^2 h_{\mathbb{D}}       (u)^{[n-2]})
    = \frac{1}{n-1}\operatorname{tr}\big(\operatorname{cof}      (D^2 h_{\mathbb{D}}(u))\, D^2 h_K(u)\big).
    $$
    In polar coordinates $(\theta,w)$ on $\mathbb{S}^{n-1}$, we have
    $
    h_{\mathbb{D}}(\theta,w) = \sin\theta.
    $
    Since $h_{\mathbb{D}}$ depends only on $\theta$, a direct computation (using \eqref{D2polar}) shows that
    $$
    D^2 h_{\mathbb{D}}(\theta,w) = \frac{1}{\sin\theta}\, g_{\mathbb{S}^{n-2}}.
    $$
    Thus $D^2 h_{\mathbb{D}}(u)$ has eigenvalue $\frac{1}{\sin\theta}$ in directions tangent to $\mathbb{S}^{n-2}$ and eigenvalue $0$ in the $\theta$–direction, and thus,
    $$
    \operatorname{cof}(D^2 h_{\mathbb{D}}(\theta,w))
    = \frac{1}{(\sin\theta)^{n-2}}\,
    e^\theta \otimes e^\theta.
    $$
    Hence,
    $$
    \operatorname{tr}\big(\operatorname{cof}(D^2    h_{\mathbb{D}}(\theta,w))\, D^2 h_K(\theta,w)\big)
    = \frac{1}{(\sin\theta)^{n-2}}
    \langle D^2 h_K(\theta,w)e_\theta,\, e_\theta\rangle.
    $$
    By (\ref{D2polar}), the matrix $D^2h_K$ satisfies
    $$
    \langle D^2 h_K(\theta,w)e_\theta, e_\theta\rangle
    = \partial_\theta^2 h_K(\theta,w) +        h_K(\theta,w).
    $$
    Combining the expressions above, we obtain
    $$
    dS_1(K,\mathbb{D},\,\cdot\,)
    = \frac{1}{n-1}\big(\partial_\theta^2 h_K(\theta,w) + h_K(\theta,w)\big)
    \, d\theta\, dw,
    $$
    concluding the computation.
    \end{proof}

    \subsection{Differential equations along meridians}
    
    In view of Lemma~\ref{polar_density}, our problem can be expressed as an infinite system of ordinary differential equations along the meridians: We want to find a function $h : \S^{n-1} \to \mathbb{R}$
    such that, for each fixed $w \in \mathbb{S}^{n-2}$,
     \begin{equation}\label{ODEsmeridians}
    \partial_\theta^2 h(\theta,w) + h(\theta,w)
    = (n-1)q(\theta,w),
    \qquad \theta\in(0,\pi), \quad w\in \S^{n-2}.
     \end{equation}
    subject to the Dirichlet boundary conditions
    $$
    h(0,w) = a \qquad \text{and} \qquad h(\pi,w) = b,
    $$
    where $a,b\in\mathbb{R}$ are constants independent of $w$.
    For each fixed $w\in\S^{n-1}$, this yields a second order ordinary differential equation with Dirichlet boundary conditions. We can treat this problem by simply applying some standard theory of ordinary differential equations (see, e.g., \cite{Heuser2006}), but for the convenience of the reader, we provide a short proof. 
    
    \begin{prop}\label{prop:Fredholm_alternative_ODE}
    	Let $f\in C([0,\pi])$ and $a,b\in\mathbb{R}$. The boundary value problem
    	\begin{equation}\label{eq:ODE_dirichlet}
    	\begin{cases}
    		h''(\theta) + h(\theta) = f(\theta), & \theta\in(0,\pi),\\
    		h(0) = a, \quad h(\pi) = b.
    	\end{cases}
    	\end{equation}
        admits a solution $h\in C^2([0,\pi])$ if and only if
        \begin{equation}\label{eq:ODE_dirichlet_cond}
            \int_0^\pi f(\theta)\sin\theta\,d\theta = a + b.
        \end{equation}
        In this case, $h\in C^2([0,\pi])$ is a solution to (\ref{eq:ODE_dirichlet}) if and only if there exists $c\in\R$ such that
        \begin{equation}\label{eq:ODE_dirichlet_sol}
            h(\theta)
        	= \int_{0}^{\theta} \sin(\theta-\sigma)\, f(\sigma)\, d\sigma
        	+ a \cos\theta + c \sin\theta,
    	\qquad \theta \in [0,\pi].
        \end{equation}
    \end{prop}
    \begin{proof}
        If~\cref{eq:ODE_dirichlet} admits a solution $h\in C^2([0,\pi])$, then twofold integration by parts yields
        \begin{align*}
            \int_0^\pi f(\theta)\sin\theta\,d\theta
            &= \int_0^\pi h(\theta)\sin\theta\,d\theta + \int_0^\pi h''(\theta)\sin\theta\,d\theta \\
            &= \int_0^\pi h(\theta)\sin\theta\,d\theta - \int_0^\pi h(\theta)\sin\theta\,d\theta + h(0)+ h(\pi)
            = a+b,
        \end{align*}
        and therefore, condition~\cref{eq:ODE_dirichlet_cond} is necessary.

        Conversely, suppose that $f$ satisfies this condition and let $h$ be defined by~\cref{eq:ODE_dirichlet_sol}. Clearly, $h(0)=a$ and by~\cref{eq:ODE_dirichlet_cond}, we also have $h(\pi)=b$. Applying the Leibniz integral rule twice shows that $h$ is twice differentiable and
        \begin{equation*}
            \frac{d^2}{d\theta^2} \int_{0}^{\theta} \sin(\theta-\sigma)\, f(\sigma)\, d\sigma
            = f(\theta) - \int_{0}^{\theta} \sin(\theta-\sigma)\, f(\sigma)\, d\sigma.
        \end{equation*}
        Since $f\in C([0,\pi])$, we obtain that $h\in C^2([0,\pi])$ and satisfies the differential equation $h''+h=f$. Hence $h$ is a solution to~\cref{eq:ODE_dirichlet}.
        
        Finally, if $\tilde{h} \in C^2([0,\pi])$ is another solution to the boundary problem~\cref{eq:ODE_dirichlet}, then $h_0=h-\tilde{h}\in C^2([0,\pi])$ is a solution to the homogeneous boundary problem
        \begin{equation*}
    	\begin{cases}
    		h_0''(\theta) + h_0(\theta) = 0, & \theta\in(0,\pi),\\
    		h_0(0) = 0, \quad h_0(\pi) = 0.
    	\end{cases}
    	\end{equation*}
        Thus $h_0(\theta)=c \sin\theta$ for some $c\in\R$ which shows that indeed all solutions $h\in C^2([0,\pi])$ are of the form~\cref{eq:ODE_dirichlet_sol}.
    \end{proof}

    In the same spirit as in \cref{sec:convol_rep}, the idea is now to simply glue the solutions from each meridian together to a smooth solution on the unit sphere, including the north and south pole. In this process, the integral condition~\cref{eq:ODE_dirichlet_cond} will give rise to the first of the following two conditions on the unit sphere. The second one corresponds to differentiability at north and south pole.
    \begin{equation}\tag{C}\label{eq:centeredness_condition_strong}
            \begin{gathered}
            \text{There exist $t\in\R$ and $x \in\R^{n-1}$ such that for all $w\in\S^{n-2}$,} \\
            \int_{0}^{\pi} \sin\theta f(\theta,w) \,d\theta = t \qquad \text{and} \qquad
            \int_{0}^\pi \cos\theta \, f(\theta, w) \,d\theta = \langle x, w\rangle.
            \end{gathered}
    \end{equation}
    We state the condition here so that we can just refer to it in the following and state the results more concisely. From \cref{prop:Fredholm_alternative_ODE}, we obtain the following theorem.   

 \begin{thm}\label{thm:PDE_dirichlet}
        Let $f\in C([0,\pi]\times\S^{n-2})$ be such that $f(\theta,\,\cdot\,)\in C^2(\S^{n-2})$ for all $\theta \in [0,\pi]$.
        The problem
        \begin{equation}\label{eq:PDE_dirichlet}
		      \partial_\theta^2 h(\theta,w) + h(\theta,w) = f(\theta, w), \qquad \theta\in(0,\pi), \quad \,w\in \S^{n-1}.
        \end{equation}
        admits a solution $h\in C^2(\S^{n-1})$ if and only if $f$ satisfies condition~\eqref{eq:centeredness_condition_strong}.  
        
        In this case, $h\in C^2(\S^{n-1})$ is a solution to (\ref{eq:PDE_dirichlet}) if and only if there exist $a\in\R$ and $z\in\R^{n-1}$ such that
        \begin{equation}\label{eq:PDE_dirichlet_solution}
        h(\theta,w) = \int_{0}^\theta \sin(\theta - \sigma)f(\sigma, w)d\sigma   + a\,\cos\theta + \langle z,w\rangle \,\sin\theta, \qquad \theta\in [0,\pi],\quad  w\in \S^{n-2}.
        \end{equation}
        \end{thm}
    \begin{proof}
        Suppose first that \eqref{eq:PDE_dirichlet} admits a solution $h\in C^2(\S^{n-1})$. Then, according to \cref{prop:Fredholm_alternative_ODE},
        \begin{equation*}
            \int_{0}^{\pi} \sin\theta f(\theta,w) \,d\theta = h(e_n) + h(-e_n),
            \qquad w\in\S^{n-1},
        \end{equation*}
        and the solution $h$ is of the form
        \begin{equation*}
        h(\theta,w) = \int_{0}^\theta \sin(\theta - \sigma)f(\sigma, w)d\sigma   + h(e_n) \,\cos\theta + c(w) \,\sin\theta, \qquad \theta\in [0,\pi],\quad  w\in \S^{n-1}.
        \end{equation*}
        Moreover, since $h\in C^2(\S^{n-1})$, we may consider its first-order derivatives at the north and south pole. For all $w\in\S^{n-2}$,
        \begin{align*}
            \pair{\nabla h(e_n)}{w}
            &= \left.\frac{d}{d\theta}\right|_{0}h(\cos \theta \, e_n + \sin\theta \, w)
            = \left.\frac{d}{d\theta}\right|_{0}h(\theta,w)
            = \partial_\theta h(0,w), \qquad\text{and} \\
            \pair{\nabla h(- e_n)}{w}
            &= \left.\frac{d}{d\theta}\right|_{0}h(\cos \theta \, (-e_n) + \sin\theta \, w)
            = - \left.\frac{d}{d\theta}\right|_{0} h(\pi-\theta,w)
            = \partial_\theta h(\pi,w).
        \end{align*}
        On the other hand, the integral representation of $h$ yields that for all $w\in\S^{n-2}$,
        \begin{equation*}
        \partial_{\theta} h(0,w) = c(w) \qquad\text{and}\qquad
        \partial_{\theta}h(\pi,w) = \int_{0}^{\pi} \cos\theta f(\theta, w)d\theta  - c(w).
        \end{equation*}
        Combining these computations, we obtain that necessarily,
        \begin{equation*}
        c(w) = \pair{z}{w}
        \qquad\text{and}\qquad
            \int_{0}^\pi \cos\theta \, f(\theta, w) \,d\theta = \langle x, w\rangle,\qquad w\in\S^{n-2},
        \end{equation*}
        where $z=\nabla h(e_n)$ and $x=\nabla h(e_n) + \nabla h(-e_n)$.
        
        Conversely, suppose that $f\in C(\S^{n-1})$ satisfies the conditions of the theorem.
        Take any $a\in\R$ and $z\in\R^{n-1}$, and define $h$ by the the formula \eqref{eq:PDE_dirichlet_solution}.
        Since $f\in C(\S^{n-1})$ and $f(\theta,\,\cdot\,)\in C^2(\S^{n-2})$ for all $\theta \in [0,\pi]$, it follows that $h\in C^2(\S^{n-1}\setminus \{\pm e_n\})$. Moreover, by \cref{prop:Fredholm_alternative_ODE}, we have that $h(\,\cdot\, ,w)\in C^2([0,\pi])$ for all $w\in\S^{n-2}$, and that $h$ is a solution to \eqref{eq:PDE_dirichlet}. It remains to show that $h$~is indeed a $C^2(\S^{n-1})$ function. To this end, observe that for any fixed $w\in\S^{n-2}$, the limits of $\nabla h(\theta,w)$ as $\theta \to 0$ or $\theta \to \pi$ exist in the space $\R^n$. Due to the fact that
        \begin{align*}
            \partial_\theta h(0,w)
            = \pair{z}{w}
            \qquad\text{and}\qquad
            \partial_\theta h(\pi,w)
            = \int_{0}^{\pi} \cos\theta f(\theta, w)d\theta - \pair{z}{w}
            = \pair{x-z}{w},
        \end{align*}
        the covariant derivatives of $h$ at the poles exist and are given by $\nabla h(e_n)=z$ and $\nabla h(-e_n)=x-z$. In particular, they do not depend on $w$, so $f\in C^1(\S^{n-1})$. Similarly, for any fixed $w\in\S^{n-2}$, the limits of $\nabla^2 h(\theta,w)$ as $\theta \to 0$ or $\theta \to \pi$ exist in the space $\R^{n\times n}$. Since
        \begin{align*}
            \partial_\theta^2 h(0,w)
            &= - h(0,w) + f(0,w)
            = f(e_n) - h(e_n)
            \qquad\text{and}\\
            \partial_\theta^2 h(\pi,w)
            &= - h(\pi,w) + f(\pi,w)
            = f(-e_n) - h(-e_n),
        \end{align*}
        it follows that the second-order covariant derivatives of $h$ exist at the poles are given by $\nabla^2 h(e_n)=(f(e_n)-h(e_n))I_{n-1}$ and $\nabla^2 h(-e_n)=(f(-e_n)-h(-e_n))I_{n-1}$. As before, due to the independence of~$w$, we deduce that $h\in C^2(\S^{n-1})$. This concludes the proof.
    \end{proof}

    \medskip

    We now return to the Christoffel type problem involving the disk, and recall that mixed area measures always have their centroid at the origin. The following lemma clarifies the relationship between this centeredness and condition~\cref{eq:centeredness_condition_strong}.

    \begin{lem}
        If $f\in C([0,\pi]\times \S^{n-2})$ satisfies condition~\eqref{eq:centeredness_condition_strong}, then the signed Radon measure $\mu(du)= f(\theta,w)\, d\theta\, dw$ on $\S^{n-1}$ has centroid at the origin. 
    \end{lem}
    \begin{proof}
        Direct computation shows that
        \begin{align*}
            &\int_{\S^{n-1}} u\, \mu(du)
            = \int_{\S^{n-2}} \int_{0}^{\pi} (\cos\theta\, e_n + \sin\theta\, w) f(\theta,w)\, d\theta\, dw \\
            &\qquad =   \int_{\S^{n-2}} \int_{0}^{\pi} \cos\theta\, f(\theta,w)\, d\theta\, dw \, e_n +  \int_{\S^{n-2}} \int_{0}^{\pi} \sin\theta f(\theta,w)\, d\theta\, w\, dw \\
            &\qquad =  \int_{\S^{n-2}} \pair{x}{w}\, dw \, e_n +  t \int_{\S^{n-2}} w\, dw
            = 0e_n + to
            = o,
        \end{align*}
        where we used the fact that the integral of any odd function on $\S^{n-2}$ must be zero.
    \end{proof}

    In order to determine when the solutions to \cref{eq:PDE_dirichlet} extend to a support function on $\R^n$, we want to employ the classical characterization in terms of positive definiteness of the Hessian. In the polar coordinates $(\theta,w)$, one coordinate plays a different role then the others, giving rise to the block matrix decomposition~\cref{eq:Hessian_polar_coords}. We want to exploit this decomposition in determining the definiteness, using the notion of the \emph{Schur complement}.
    
    \begin{defi}
        Consider a symmetric matrix $X\in\R^{(k+\ell)\times (k+\ell)}$ of the form    
    \begin{equation*}
        X = \begin{bmatrix}
            A   & B^{\top} \\
            B & C
        \end{bmatrix},
    \end{equation*}
    where $A\in\R^{k\times k}$, $C\in\R^{\ell\times\ell}$ are symmetric and $B\in\R^{\ell \times k}$.    
    If the block $A$ is invertible, then its \emph{Schur complement} is defined as $X/A = C - B^{\top} A^{-1} B$. 
    \end{defi}

    This notion allows to equate the positive definiteness of the matrix $X$ to the positive definiteness of the two smaller matrices $A$ and $X/A$. This is the content of the following classical and very easy theorem. 
    
    \begin{thm}\label{thm:Schur_complement}
        Let $X$ be as above and $A$ be invertible. Then $X$ is positive definite if and only if $A$ and $X/A$ are each positive definite.
    \end{thm}
    \begin{proof}
        Direct computation shows that
        \begin{equation*}
            X = \begin{bmatrix}
            I_{k}   & 0 \\
            B^{\top}A^{-1} & I_{\ell}
        \end{bmatrix}\begin{bmatrix}
            A   & 0 \\
            0 & X/A
        \end{bmatrix}\begin{bmatrix}
            I_k   & A^{-1} B \\
            0 & I_{\ell}
        \end{bmatrix},
        \end{equation*}
        and thus, $X$ is congruent to a block diagonal matrix of $A$ and $X/A$.
    \end{proof}

    We now have all ingredients in place to prove our second main result, \cref{mthm:Christoffel_disk_C2+}, which we repeat here for the reader's convenience. The integral representation of the support function of $K$ is a direct consequence of \cref{thm:PDE_dirichlet}.

    \begin{thm}\label{thm:disk_Christoffel_convcond}
        Let $q\in C([0,\pi]\times \S^{n-2})$ be strictly positive and $q(\theta,\,\cdot\,)\in C^2(\S^{n-2})$ for all $\theta \in [0,\pi]$.
        There exists some convex body $K\in\K(\R^n)$ of class $C^2_+$ such that $q(\theta,w)d\theta\, dw=S_1(K,\DD,du)$ if and only if $q$ satisfies condition~\eqref{eq:centeredness_condition_strong} and for all $(\theta,w)\in (0,\pi) \times \S^{n-2}$ and unit tangent vectors $\xi \in T_w \S^{n-2}$,
        \begin{align}\label{eq:disk_Christoffel_convcond}
        \begin{split}
            &\sin^2\!\theta\, q(\theta,w)\int_{0}^\theta[\sin(\theta - \sigma)\xi^\top\nabla^2_{\S^{n-2}}q(\sigma,w) \,\xi  +  \sin\theta\cos\sigma q(\sigma,w) ] \,d\sigma \\
            &\qquad > \bigg(\int_{0}^\theta\sin\sigma\, \xi^{\top}\nabla^{\S^{n-2}}q(\sigma,w)\,d\sigma\bigg)^{\! 2}\! .
            \end{split}
        \end{align}
        In this case, the body $K$ is unique up to translations.
    \end{thm}
    \begin{proof}
        First, note that by \cref{polar_density} and \cref{thm:PDE_dirichlet}, for a $C^2_+$ convex body $K\in\K^n$ to exist such that $q(\theta,w)d\theta\, dw=S_1(K,\DD,du)$, it is necessary that $q$ satisfies condition \cref{eq:centeredness_condition_strong}.
    
        Conversely, if $q$ satisfies condition~\eqref{eq:centeredness_condition_strong}, \cref{thm:PDE_dirichlet} asserts that the function $h$, defined as
        \begin{equation*}
        h(\theta,w) = \int_0^{\theta} \sin(\theta - \sigma)q(\sigma,w)\, d\sigma,
        \qquad \theta\in[0,\pi], \quad w\in\S^{n-1},
        \end{equation*}
        is a $ C^2(\S^{n-1})$ function that solves the differential equation~\eqref{eq:PDE_dirichlet}. Moreover, any other solution to~\eqref{eq:PDE_dirichlet} differs from $h$ by a linear function restricted to the unit sphere. Hence, due to~\eqref{eq:polar_density}, there exists some convex body $K\in\K^n$ such that $q(u)du = S_1(K,\DD,du)$ if and only if $h$ is a support function, and in that case, $\frac{1}{n-1}h$ is the support function of $K$ up to a translation.

        We recall that $h$ is the support function of a $C^2_+$ convex body if and only if the Hessian of its one-homogeneous extension, denoted as $D^2h(u)$, is positive definite at every point $u\in\S^{n-1}$ (see, e.g., \cite{Schneider2014}*{Section~1.7}). Moreover by \cref{eq:Hessian_polar_coords}, $D^2h(\theta,w)$ can be represented by the matrix
        \begin{equation*}
        X
        =   \begin{bmatrix}
            a & b^{\top} \\
            b & C
        \end{bmatrix},
    \end{equation*}
      where the blocks $a\in\R$, $b\in\R^{n-2}$, and  $C\in\R^{(n-2)\times (n-2)}$ are given by $a = \partial_\theta^2 h + h$,
    \begin{align*}
        b^\top &\,=\, \dfrac{1}{\sin\theta}\,\nabla^{\S^{n-2}}\partial_\theta h
    	- \dfrac{\cot\theta}{\sin\theta}\,\nabla^{\S^{n-2}}h,\qquad\text{and}\\
    	C &\,=\, \dfrac{1}{\sin^2\!\theta}\,\nabla_{\S^{n-2}}^2h
    	+ (\cot\theta\,\partial_\theta h + h)\,I_{n-2}.
    \end{align*}
    Since $h$ solves the differential equation~\eqref{eq:PDE_dirichlet}, we have that $a=\partial_\theta^2 h + h=q$, and thus, is strictly positive by our assumption on $q$. Hence, by \cref{thm:Schur_complement}, the matrix $X$ is positive definite if and only if the Schur complement
    \begin{equation*}
        S(\theta,w) = X/a = C - \frac{1}{a}b\otimes b,
    \end{equation*}
    understood as a self-adjoint operator on $T_w\S^{n-2}$, is positive definite.

    Hence, we need to express the Schur complement $S(\theta,w)$ in terms of the given data $q$. In order to compute the block $b$, note that
    \begin{align*}
        \nabla^{\S^{n-2}}h(\theta,w)
        &= \int_{0}^\theta \sin(\theta - \sigma)\nabla^{\S^{n-2}} q(\sigma,w)\,d\sigma,\\
        \nabla^{\S^{n-2}} \partial_\theta  h(\theta,w)
        &= \int_{0}^\theta \cos(\theta - \sigma)\nabla^{\S^{n-2}}q(\sigma,w)\,d\sigma,
    \end{align*}
    and thus,
    \begin{align*}
        b
        &= \dfrac{1}{\sin\theta}\,\nabla^{\S^{n-2}}\partial_\theta h(\theta,w)
    	- \dfrac{\cot\theta}{\sin\theta}\,\nabla^{\S^{n-2}}h(\theta,w) \\
        &= \frac{1}{\sin\theta} \int_{0}^\theta [\cos(\theta - \sigma) - \cot\theta \sin(\theta - \sigma)]\nabla^{\S^{n-2}}q(\sigma,w) d\sigma = \frac{1}{\sin^2 \theta}\int_{0}^\theta \sin\sigma\,\nabla^{\S^{n-2}}q(\sigma,w)\,d\sigma,
    \end{align*}
    where we used the trigonometric identity
    $ \cos(\theta - \sigma) - \cot\theta \sin(\theta - \sigma)
    = {\sin\sigma}/{\sin\theta}$.

    In order to compute the block $C$, note that
    \begin{align*}
         \nabla_{\S^{n-2}}^2 h(\theta,w)
         &= \int_{0}^\theta \sin(\theta - \sigma)\nabla_{\S^{n-2}}^2 q(\sigma,w)\,d\sigma,
    \end{align*}
    and thus,
    \begin{align*}
    	C
        &= \dfrac{1}{\sin^2\!\theta}\,\nabla_{\S^{n-2}}^2h(\theta,w)+  (\cot\theta\,\partial_\theta h(\theta,w) + h(\theta,w))\, I_{n-2} \\
    		&=  \dfrac{1}{\sin^2\!\theta} \int_{0}^\theta \sin(\theta - \sigma)\nabla_{\S^{n-2}}^2 q(\sigma,w) \,d\sigma +  \int_{0}^\theta [ \cot\theta\cos(\theta - \sigma) + \sin(\theta - \sigma)]q(\sigma,w) \,d\sigma\, I_{n-2} \\
    		& =  \frac{1}{\sin^2 \theta} \int_{0}^\theta \sin(\theta - \sigma)\nabla_{\S^{n-2}}^2 q(\sigma,w) \, d\sigma +   \frac{1}{\sin\theta}\int_{0}^\theta \cos\sigma q(\sigma,w) \, d\sigma  \,I_{n-2},
    \end{align*}
    where we used the trigonometric identity    
    $ \cot\theta\,\cos(\theta - \sigma) + \sin(\theta - \sigma)
    = {\cos\sigma}/{\sin\theta}$.
    
    Putting all together yields
    \begin{align*}
        \xi^{\top} S(\theta,w) \xi
        &=  \xi^{\top} C(\theta,w) \xi - \frac{1}{q(\theta,w)} (\xi^{\top} b(\theta,w))^2 \\
        &= \frac{1}{\sin^2\theta} \int_{0}^\theta \sin(\theta - \sigma)\, \xi^{\top} \nabla_{\S^{n-2}}^2  q(\sigma,w) \xi \, d\sigma + \frac{1}{\sin\theta}\int_{0}^\theta \cos\sigma q(\sigma,w) \, d\sigma\\
        &\qquad - \frac{1}{(\sin\theta)^{4}\,q(\theta,w)} \bigg(\int_{0}^\theta \sin\sigma\,\xi^{\top}\nabla^{\S^{n-2}}q(\sigma,w) \, d\sigma\bigg)^2.
    \end{align*}
    By our considerations at the beginning of the proof, $h$ is the support function of some $C^2_+$ convex body $K\in\K(\R^n)$ if and only $\xi^{\top}S(\theta,w)\xi > 0$ for all $(\theta,w)\in (0,\pi) \times \S^{n-2}$ and unit tangent vectors $\xi \in T_w \S^{n-2}$. This concludes the argument.
    \end{proof}

    \subsection{Discussion}\label{sec:discussion2}

    Consider again the case where the given density $q$ is a zonal function on $\S^{n-1}$, or equivalently, the corresponding body $K$ is a body of revolution. Then $q$ depends only on the polar angle $\theta$, and thus, $q(\theta,w)=\tilde{q}(\theta)$ for some function $\tilde{q}$. Suppose now that  $\tilde{q}\in C([0,\pi])$ is strictly positive. Condition~\cref{eq:centeredness_condition_strong} is then equivalent to  
    \begin{equation*}
        \int_0^\pi \cos\theta\, \tilde{q}(\theta) = 0.
    \end{equation*}
    Since $\tilde{q}$ and $\sin$ are strictly positive on $(0,\pi)$, the convexity condition \cref{eq:disk_Christoffel_convcond} reads as
    \begin{equation*}
        \int_0^\theta \cos\sigma\, \tilde{q}(\sigma) > 0,
        \qquad \text{for all }\theta\in (0,\pi).
    \end{equation*}
    For $\theta\in (0,\frac{\pi}{2}]$, this condition is clearly satisfied. For $\theta\in (\frac{\pi}{2},\pi)$, we may use the identity above to rewrite the integral on the left hand side as an integral over the domain $(\theta,\pi)$ with a positive integrand. Hence, the convexity condition will always be satisfied by $q$. As an immediate consequence of \cref{thm:disk_Christoffel_convcond}, we obtain the following corollary.

    \begin{cor}
        Let $q$ be a strictly positive, centered, and zonal function on $\S^{n-1}$. Then there exists a body of revolution $K\in\K(\R^n)$ of class $C^2_+$ such that $q(u)du=S_1(K,\DD;du)$.
    \end{cor}



        \section*{Acknowledgments}
    The authors would like to thank Karol\'y B\"or\"oczky for pointing out reference \cite{Li2021} to us.

	The first-named author was supported by the Deutsche Forschungsgesellschaft (DFG), project \href{https://gepris.dfg.de/gepris/projekt/520350299}{520350299}. The second-named author was supported by the Austrian Science Fund (FWF), project \href{https://doi.org/10.55776/P34446}{doi:10.55776/P34446} and project \href{https://doi.org/10.55776/ESP9378724}{doi:10.55776/ESP9378724}, ESPRIT program.
	The third-named author was supported by the Austrian Science Fund (FWF), project \href{https://doi.org/10.55776/ESP236}{doi:10.55776/ESP236}, ESPRIT program.


    \begingroup
	\let\itshape\upshape
	
	\bibliographystyle{abbrv}
	\bibliography{references}{}
	
	\endgroup

\end{document}